\newtheorem{thm}{\bf Theorem}[section]
\newtheorem{prop}[thm]{\bf Proposition}
\newtheorem{lem}[thm]{\bf Lemma}
\newtheorem{cor}[thm]{\bf Corollary}
\newtheorem*{thm*}{\bf Theorem}
\newtheorem*{cor*}{\bf Corollary}
\theoremstyle{definition}
\newtheorem{rem}[thm]{\it Remark}
\newtheorem*{df*}{\bf Definition}
\newtheorem*{not*}{\bf Notation}
\newtheorem*{dfs*}{\bf Definitions}
\newtheorem*{ack*}{\bf Acknowledgements}
\newtheorem*{dfrem*}{\bf Definition and Remark}
\newtheorem*{conv*}{\bf Convention}
\def\P{\mathbb{P}}
\def\C{\mathbb{C}}
\def\Q{\mathbb{Q}}
\def\R{\mathbb{R}}
\def\O{\mathcal{O}}
\DeclareMathOperator{\Aut}{Aut}
\DeclareMathOperator{\Bir}{Bir}
\DeclareMathOperator{\Spec}{Spec}
\DeclareMathOperator{\Proj}{Proj}
\DeclareMathOperator{\Pic}{Pic}
\DeclareMathOperator{\Top}{top}
\DeclareMathOperator{\Nef}{Nef}
\DeclareMathOperator{\Eff}{Eff}
\DeclareMathOperator{\NS}{NS}
\subjclass[2010]{14G05, 14J32, 14J27}
\keywords{Rational points, Calabi-Yau varieties, Elliptic fibrations}
\title[\tiny Higher-dimensional Calabi-Yau varieties with dense sets of rational points]
{Higher-dimensional Calabi-Yau varieties with dense sets of rational points}
\date{\today}
\author{Fumiaki Suzuki}
\address{UCLA Mathematics Department, Box 951555, Los Angeles, CA, 90095-1555}
\email{suzuki@math.ucla.edu}
\begin{document}

\begin{abstract}
We construct higher-dimensional Calabi-Yau varieties defined over a given number field with Zariski dense sets of rational points.
We give two elementary constructions in arbitrary dimensions as well as another construction in dimension three which involves certain Calabi-Yau threefolds containing an Enriques surface.
The constructions also show that potential density holds for 
(sufficiently) 
general members of the families.
\end{abstract}
\maketitle

\section{Introduction}
For a smooth projective variety $X$ defined over a number field, one can ask whether the set of rational points is Zariski dense in $X$.
It is expected that the set of rational points reflects the positivity of the canonical bundle $\omega_{X}$ (cf. the Bombieri-Lang conjecture) and it is tempting to study the intermediate case $\omega_{X}=\O_{X}$.
In this case, $X$ belongs to the class of special varieties introduced by Campana \cite{C} which is conjecturally the same as that of varieties where rational points are potentially dense, that is, Zariski dense after passing to some finite field extension.
An interesting subcase is given by abelian varieties, for which potential density is well-known.

Another important and yet challenging subcase is given by
Calabi-Yau varieties in a {\it strict sense},
i.e., smooth projective varieties $X$ with $\omega_{X}=\O_{X}$ and $H^{0}(X,\Omega_{X}^{i})=0$ for all $0<i<\dim X$, which are simply connected over $\C$.
For elliptic K3 surfaces and K3 surfaces with infinite automorphism groups, potential density holds due to Bogomolov-Tschinkel \cite{BT3}.
Moreover, there are several works on K3 surfaces over the rational numbers with Zariski dense sets of rational points; for instance, quartic K3 surfaces have long been studied \cite{E, LMvL, SD1, SD2}.

Very little is known in higher dimensions.
It is stated by Tschinkel \cite[after Problem 3.5]{T} that it would be worthwhile to find non-trivial examples of Calabi-Yau threefolds over number fields with 
Zariski 
dense sets of rational points.
It is only recent that the first such examples were actually obtained:
Bogomolov-Halle-Pazuki-Tanimoto \cite{BHPT} studied Calabi-Yau threefolds with abelian surface fibrations
and showed potential density for threefolds (including simply connected ones) constructed by Gross-Popescu \cite{GP}.
However, it is not immediately clear whether their method can be used to determine the minimal field extensions over which rational points become Zariski dense.

In this short note, we construct higher-dimensional Calabi-Yau varieties in a strict sense defined over a given number field with Zariski dense sets of rational points.
We give two elementary constructions in arbitrary dimensions (Section \ref{C1}, \ref{C2}) as well as another construction in dimension three which involves certain Calabi-Yau threefolds containing an Enriques surface (Section \ref{C3}).
The constructions also show that potential density holds for 
(sufficiently) 
general members of the families.

The third construction is a by-product of the author's attempt to analyze in detail the recent construction due to Ottem-Suzuki \cite{OS} of a pencil of Enriques surfaces with non-algebraic integral Hodge classes.
For the third construction, our example contains no abelian surface and is a unique minimal model in its birational equivalence class,
 thus a theorem of Bogomolov-Halle-Pazuki-Tanimoto \cite[Theorem 1.2]{BHPT} cannot be applied.
For all the constructions, elliptic fibration structures are crucial.

We work over a number field unless otherwise specified.

\begin{ack*}
The author wishes to thank Lawrence Ein, 
Yohsuke Matsuzawa, 
John Christian Ottem,
Ramin Takloo-Bighash, and Burt Totaro for interesting discussions.
He also thanks the anonymous referee for useful suggestions.
\end{ack*}

\section{Construction I}\label{C1}
The first idea is to construct elliptic Calabi-Yau varieties in a strict sense (i.e., $H^{0}(X,\Omega^{i}_{X})=0$ for all $0<i<\dim X$ and simply connected over $\C$) whose base spaces are rational and which admits infinitely many sections.
For that purpose, we introduce variants of Schoen's Calabi-Yau fiber products of rational elliptic surfaces \cite{Sch}.

We let $S\subset \P^{1}\times \P^{2}$ be a smooth hypersurface of bi-degree $(1,3)$.
Then the first projection defines an elliptic fibration $f\colon S\rightarrow \P^{1}$.
Moreover, via the second projection, $S$ is the blow-up of $\P^{2}$ along the nine points given by the intersection of two cubic curves, hence rational.
For $n\geq 3$, we let $Y\subset \P^{1}\times \P^{n-1}$ be a smooth hypersurface of bi-degree $(1,n)$.
The first projection restricts to a fibration into Calabi-Yau hypersurfaces $g\colon Y\rightarrow \P^{1}$ and $Y$ is again rational via the second projection.
We assume that over any point in $\P^{1}$ either $f$ or $g$ is smooth, which is satisfied if $Y$ is general with respect to $S$.
We define $X=S\times_{\P^{1}}Y$ and let $\pi\colon X\rightarrow \P^{1}$ be the natural projection.

\begin{lem}\label{lCYI}
The fiber product $X$ is a Calabi-Yau $n$-fold in a strict sense.
\end{lem}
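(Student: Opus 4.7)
My plan is to verify the four properties defining a strict-sense Calabi-Yau $n$-fold: projectivity and smoothness of dimension $n$; $\omega_X \cong \O_X$; $H^0(X, \Omega^i_X) = 0$ for $0 < i < n$; and simple connectedness of $X^{\mathrm{an}}$. Projectivity is automatic since $X \subset \P^1 \times \P^2 \times \P^{n-1}$, and smoothness of dimension $n$ follows immediately from the hypothesis: at every $x \in X$ over $t \in \P^1$, one of $f, g$ is smooth near $t$, so one of $p_S, p_Y$ is a base change of a smooth morphism (hence smooth at $x$), forcing $X$ to be smooth at $x$.

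For $\omega_X \cong \O_X$, I would compute $\omega_{S/\P^1}$ and $\omega_{Y/\P^1}$ by adjunction in $\P^1 \times \P^2$ and $\P^1 \times \P^{n-1}$: both equal $\O_{\P^1}(1)$ pulled back. Under the smoothness hypothesis the relative cotangent sequence
\[
0 \to p_S^* \Omega_{S/\P^1} \to \Omega_{X/\P^1} \to p_Y^* \Omega_{Y/\P^1} \to 0
\]
is exact, and taking determinants and combining with $0 \to \pi^* \Omega_{\P^1} \to \Omega_X \to \Omega_{X/\P^1} \to 0$ yields
\[
\omega_X \cong \pi^*\omega_{\P^1} \otimes p_S^*\omega_{S/\P^1} \otimes p_Y^*\omega_{Y/\P^1} \cong \pi^* \O_{\P^1}(-2+1+1) = \O_X.
\]

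For the Hodge vanishing, by Hodge symmetry I would reduce to $H^i(X, \O_X) = 0$ for $0 < i < n$, and compute this through the flat base change isomorphism $R\pi_* \O_X \cong Rf_* \O_S \otimes^L_{\O_{\P^1}} Rg_* \O_Y$. Relative duality and projection formula give $Rf_* \O_S \cong \O \oplus \O(-1)[-1]$. Analogously one identifies the edge pieces $R^0 g_* \O_Y = \O$ and $R^{n-2} g_* \O_Y \cong \O(-1)$. The crux is showing $R^j g_* \O_Y = 0$ for $0 < j < n - 2$: I would combine cohomology-and-base-change on the smooth locus of $g$ (where fibers are smooth Calabi-Yau hypersurfaces with vanishing intermediate Hodge numbers, so these sheaves are torsion) with the Leray spectral sequence applied to $H^j(Y, \O_Y) = 0$ (from rationality of $Y$), which forces $H^0(\P^1, R^j g_* \O_Y) = 0$; a nonzero torsion sheaf on $\P^1$ has sections, so $R^j g_* \O_Y = 0$. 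Plugging in gives
\[
R\pi_* \O_X \cong \O \oplus \O(-1)[-1] \oplus \O(-1)[-(n-2)] \oplus \O(-2)[-(n-1)],
\]
and computing cohomology on $\P^1$ using $H^*(\P^1, \O(-1)) = 0$ and $H^1(\P^1, \O(-2)) = \C$ yields the desired vanishing (together with $H^0 = H^n = \C$).

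For simple connectedness I would use $p_S \colon X \to S$: when $n \geq 4$ the general fiber is a smooth Calabi-Yau hypersurface of dimension $\geq 2$ in $\P^{n-1}$, which is simply connected by the Lefschetz hyperplane theorem, and a standard topological argument for proper morphisms with a smooth simply-connected fiber and simply-connected base gives $\pi_1(X) \cong \pi_1(S) = 1$; for $n = 3$ the variety $X$ is a Schoen Calabi-Yau threefold and simple connectedness is Schoen's result \cite{Sch}. I expect the Hodge vanishing to be the main obstacle, specifically the identification of $R^{n-2} g_* \O_Y$ as a line bundle across singular fibers and the vanishing of intermediate derived pushforwards; once these structural inputs are in hand, the rest of the verification reduces to standard formulas for fiber products and classical results on fundamental groups.
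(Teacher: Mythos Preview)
Your argument is essentially correct, but it takes a noticeably more elaborate route than the paper's. The paper's key simplification is to observe at the outset that $X$ is a smooth complete intersection of type $(1,3,0)$ and $(1,0,n)$ in $\P^{1}\times\P^{2}\times\P^{n-1}$; adjunction then gives $\omega_X=\O_X$ in one line, and the vanishing $H^{i}(X,\O_X)=0$ for $0<i<n$ is a routine Koszul/K\"unneth computation on the ambient product. Your fiber-product approach via relative dualizing sheaves and the K\"unneth-type identification $R\pi_{*}\O_X\simeq Rf_{*}\O_S\otimes^{L}Rg_{*}\O_Y$ reaches the same conclusions and has the virtue of generalizing to fiber products that are not global complete intersections, but it costs you the analysis of $R^{j}g_{*}\O_Y$ that you flag as the main obstacle. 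One technical caveat: the sequence $0\to p_S^{*}\Omega_{S/\P^{1}}\to\Omega_{X/\P^{1}}\to p_Y^{*}\Omega_{Y/\P^{1}}\to 0$ is not literally a short exact sequence of locally free sheaves, since $\Omega_{S/\P^{1}}$ and $\Omega_{Y/\P^{1}}$ fail to be locally free along singular fibers; the determinant identity you want is still true (e.g.\ via relative dualizing sheaves, or simply by the complete-intersection adjunction), but the justification as written is loose.

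For simple connectedness the two proofs genuinely diverge. The paper fibers over $\P^{1}$ via $\pi$, uses the section and a diagram chase to reduce to showing that $\pi_1(F_1)$ and $\pi_1(F_2)$ die in $\pi_1(X_{\C})$, and then factors each through a section of $X_{\C}\to S_{\C}$ or $X_{\C}\to Y_{\C}$ (both targets rational, hence simply connected); this is uniform in $n\geq 3$. You instead fiber over $S$ via $p_S$, invoke Lefschetz to make the general fiber simply connected when $n\geq 4$, and fall back on Schoen's result for $n=3$. Your argument works (note $p_S$ inherits a section from $g$, which makes the ``standard topological argument'' go through cleanly), but the paper's route avoids the case split and does not need Lefschetz on the fibers.
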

\begin{rem}
If $S_{\C}$ is very general, it is classical that the elliptic fibration $f\colon S_{\C}\rightarrow \P^{1}_{\C}$ admits infinitely many sections, 
which implies that the same holds for the natural projection $X_{\C}\rightarrow Y_{\C}$.
This provides examples of Calabi-Yau varieties in a strict sense defined over $\C$ containing infinitely many rational divisors in all dimensions $\geq 3$.
We will construct below such an example over a given number field.
\end{rem}
\begin{proof}[Proof of Lemma \ref{lCYI}]
It is immediate to see that $X$ is smooth.
The fiber product $X$ is a complete intersection in $\P^{1}\times \P^{2}\times \P^{n-1}$ of a hypersurface of tri-degree $(1,3,0)$ and that of tri-degree $(1,0,n)$.
We have $\omega_{X}=\O_{X}$ by the adjunction formula and an easy computation shows that $H^{i}(X,\O_{X})=0$ for all $0<i<n$.

For simple connectedness over $\C$,
Schoen proved this result for $n=3$ (see \cite[Lemma 1.1]{Sch0} for the strategy).
In fact, his method works for $n\geq 3$ and the argument goes as follows.
Let $U\subset \P^{1}_{\C}$ be the open subset over which $\pi\colon X_{\C}\rightarrow \P^{1}_{\C}$ is smooth and let $V=\pi^{-1}(U)$.
The natural map $\pi|_{V}\colon V\rightarrow U$ is topologically locally trivial and we let $F$ be a fiber.
We note that $\pi\colon X_{\C}\rightarrow \P^{1}_{\C}$ admits a section since $f$ and $g$ do, so does $\pi|_{V}\colon V\rightarrow U$.
Then we have a commutative diagram
\[
\xymatrix{
\pi_{1}(F)\ar[r] &\pi_{1}(V)\ar@{->>}[r]\ar@{->>}[d] &\pi_{1}(U)\ar@{->>}[d]\ar@/_1pc/[l]\\
 & \pi_{1}(X_{\C})\ar@{->>}[r]& \pi_{1}(\P^{1}_{\C}) \ar@/_1pc/[l],
}
\]
where the upper row is exact by the homotopy long exact sequence. 
Chasing the diagram and using the fact that $\pi_{1}(\P^{1}_{\C})$ is trivial, we are reduced to showing that $\pi_{1}(F)$ has the trivial image in $\pi_{1}(X_{\C})$.
Writing $F=F_{1}\times F_{2}$, where $F_{1}$ is a fiber of $f$ and $F_{2}$ is a fiber of $g$,
we see that
\[\pi_{1}(F)=\pi_{1}(F_{1})\times \pi_{1}(F_{2}).
\]
Now it is enough to verify that the image of $\pi_{1}(F_{1})$ and $\pi_{1}(F_{2})$ in $\pi_{1}(X_{\C})$ are trivial.
This is immediate since $\pi_{1}(F_{1})\rightarrow \pi_{1}(X_{\C})$ (resp. $\pi_{1}(F_{2})\rightarrow \pi_{1}(X_{\C})$) factors through the fundamental group of a section of $X_{\C}\rightarrow S_{\C}$ (resp. $X_{\C}\rightarrow Y_{\C}$) and 
since $S_{\C}$ (resp. $Y_{\C}$) is simply connected (it is rational).
The proof is complete.
\end{proof}

We give a construction of $X$ defined over $\Q$ such that $X(\Q)$ is Zariski dense in $X$.
We start from constructing $S$ defined over $\Q$ such that the elliptic fibration $f\colon S\rightarrow \P^{1}$ admits infinitely many sections over $\Q$, or equivalently,
the generic fiber $E/\Q(t)$ admits a $\Q(t)$-rational point and the Mordel-Weil group $E(\Q(t))$ has a positive rank.
The construction is as follows.

Let $C\subset \P^{2}$ be an elliptic curve defined over $\Q$ with a Zariski dense set of $\Q$-rational points.
Let $O\in C(\Q)$ (resp. $P\in C(\Q)$) be the origin (resp. a non-torsion point).
Let $D\subset \P^{2}$ be another elliptic curve defined over $\Q$ which goes through both $O$ and $P$ and which intersects transversally with $C$.
Let $S\subset \P^{1}\times \P^{2}$ be the hypersurface of bi-degree $(1,3)$ defined over $\Q$ corresponding to the pencil of elliptic curves generated by $C$ and $D$.
A zero-section of $f\colon S\rightarrow \P^{1}$ is given by the $(-1)$-curve over $O$ 
and the Mordel-Weil group $E(\Q(t))$ has a positive rank since the image of the specialization homomorphism $E(\Q(t))\rightarrow C(\Q)$ has a positive rank.
We conclude that $S$ has the desired property.

Let $Y$ be smooth, defined over $\Q$, and general so that over any point in $\P^{1}$ either $f$ or $g$ is smooth.
Then $X=S\times_{\P^{1}}Y$ is a Calabi-Yau $n$-fold in a strict sense defined over $\Q$.
Moreover the elliptic fibration $X=S\times_{\P^{1}}Y\rightarrow Y$ admits infinitely many sections over $\Q$ by construction.
We note that the set $Y(\Q)$ is Zariski dense in $Y$ since $Y$ is rational over $\Q$.
The following theorem is now immediate:

\begin{thm}\label{t1}
The set $X(\Q)$ is Zariski dense in $X$.
\end{thm}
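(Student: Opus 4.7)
The plan is to exploit the infinitely many $\Q$-sections of the projection $p\colon X\to Y$ afforded by the construction. Each such section cuts out in $X$ an irreducible divisor isomorphic over $\Q$ to the $\Q$-rational variety $Y$; on each such divisor the $\Q$-points are Zariski dense, and having infinitely many divisors of this form in the $n$-dimensional variety $X$ already forces density of $X(\Q)$.

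More precisely, since $p$ is the base change of $f\colon S\to \P^{1}$ along $Y\to \P^{1}$, every $\Q$-rational section $\tau$ of $f$ pulls back to a $\Q$-rational section $\sigma_{\tau}$ of $p$, and distinct $\tau$'s yield distinct $\sigma_{\tau}$'s (they already differ on the generic fiber of $f$). The positive Mordell-Weil rank of $E(\Q(t))$ established just before the theorem therefore produces an infinite sequence of distinct $\Q$-sections $\sigma_{i}\colon Y\to X$ of $p$. Setting $D_{i}:=\sigma_{i}(Y)\subset X$, each $D_{i}$ is an irreducible divisor defined over $\Q$ and $\Q$-isomorphic to $Y$ via $\sigma_{i}$, so Zariski density of $Y(\Q)$ in $Y$ (which holds because $Y$ is $\Q$-rational) pushes forward to Zariski density of $\sigma_{i}(Y(\Q))\subseteq D_{i}(\Q)$ in $D_{i}$.

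Finally I would observe that $\bigcup_{i} D_{i}$ is itself Zariski dense in $X$: since the $D_{i}$ are infinitely many pairwise distinct irreducible subvarieties of $X$ of codimension one, any proper closed subset $Z\subsetneq X$ (which has dimension $<n$) can contain only finitely many of them as irreducible components, and so cannot contain the union. Combining, $X(\Q)$ contains the Zariski dense set $\bigcup_{i}\sigma_{i}(Y(\Q))$, which yields the theorem. No substantive obstacle is anticipated: the argument is routine assembly of the two ingredients built into the construction, and the only step that deserves a word of justification is that distinct $\Q$-sections of $f$ produce distinct divisors in $X$, which is a formal consequence of the compatibility of base change with sections.
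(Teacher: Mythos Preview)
Your proposal is correct and is essentially the same argument the paper has in mind: the paper simply records that the elliptic fibration $X\to Y$ admits infinitely many $\Q$-sections and that $Y(\Q)$ is Zariski dense (since $Y$ is $\Q$-rational), then declares the theorem ``immediate''. You have written out precisely the routine verification behind this word---that the images $D_i=\sigma_i(Y)$ are pairwise distinct irreducible divisors each carrying a dense set of $\Q$-points, and that an infinite union of such divisors cannot sit in a proper closed subset of the irreducible $n$-fold $X$.
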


\section{Construction II}\label{C2}
We let $X\subset (\P^{1})^{n+1}$ be a smooth hypersurface of multi-degree $(2,\cdots, 2)$.
The following is an immediate consequence of the Lefschetz hyperplane section theorem:

\begin{lem}
If $n\geq 2$, then $X$ is a Calabi-Yau $n$-fold in a strict sense.
\end{lem}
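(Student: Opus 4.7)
The plan is to verify the three nontrivial defining conditions of a strict Calabi-Yau variety for $X$, since smoothness is part of the hypothesis. The key ambient observation is that each $\O_{\P^{1}}(2)$ is very ample, so $X$ is a smooth very ample divisor in $Y = (\P^{1})^{n+1}$; this will let me import Lefschetz-type statements directly.

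For the triviality of the canonical bundle, I would apply the adjunction formula. Since $\omega_{Y} = \O(-2,\dots,-2)$ and $\O_{Y}(X) = \O(2,\dots,2)$ by the multi-degree assumption, the two contributions cancel on restriction to $X$, yielding $\omega_{X} = \O_{X}$.

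For the vanishing $H^{0}(X,\Omega_{X}^{i}) = 0$ for $0 < i < n$ and for simple connectedness over $\C$, I would invoke the Lefschetz hyperplane section theorem. Its Hodge-theoretic version gives isomorphisms $H^{p,q}(Y) \cong H^{p,q}(X)$ for $p+q < n$; since $Y$ is a product of rational curves one has $H^{p,0}(Y) = 0$ for all $p > 0$, which delivers the required vanishing after setting $(p,q) = (i,0)$ with $0 < i < n$. The topological Lefschetz theorem in turn gives an isomorphism $\pi_{1}(X) \cong \pi_{1}(Y) = 1$ whenever $\dim X \geq 2$, which is precisely why the hypothesis $n \geq 2$ is imposed (for $n=1$ one would recover an elliptic curve rather than a simply connected variety).

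There is no substantive obstacle: the content of the lemma is simply the applicability of Lefschetz to the ambient $(\P^{1})^{n+1}$, and every step follows immediately once one notes that the hypersurface class is very ample. If anywhere a small check were needed it would be ensuring the Hodge-theoretic form of Lefschetz in the version that gives isomorphisms on $H^{p,q}$ for $p+q<n$, but this is standard for smooth ample divisors in smooth projective varieties.
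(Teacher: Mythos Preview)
Your proposal is correct and matches the paper's approach exactly: the paper simply states that the lemma is ``an immediate consequence of the Lefschetz hyperplane section theorem'' without giving a displayed proof, and what you have written is precisely the standard unpacking of that remark (adjunction for $\omega_X$, topological Lefschetz for $\pi_1$, and Hodge-theoretic Lefschetz for the vanishing of $H^{i,0}$).
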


We give a construction of $X$ defined over $\Q$ such that $X(\Q)$ is Zariski dense in $X$.
The construction relies on the following theorem due to Bogomolov-Tschinkel.
We recall that a multi-section of an elliptic fibration is {\it saliently ramified} if it is ramified in a point which lies in a smooth elliptic fiber.

\begin{thm}[\cite{BT1,BT2}]\label{BT}
Let $\phi\colon \mathcal{E} \rightarrow B$ be an elliptic fibration over a number field $K$.
If there exists a saliently ramified multi-section $\mathcal{M}$ such that $\mathcal{M}(K)$ is Zariski dense in $\mathcal{M}$,
then $\mathcal{E}(K)$ is Zariski dense in $\mathcal{E}$.
\end{thm}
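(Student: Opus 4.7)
My plan is to base-change the fibration along $\mathcal{M}\to B$ in order to acquire a tautological zero section, and then to use the group law in the smooth part of the resulting fibration to manufacture infinitely many $K$-rational sections, each inheriting a Zariski dense set of $K$-points from $\mathcal{M}$.

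Concretely, I would set $\pi\colon\mathcal{E}':=\mathcal{E}\times_{B}\mathcal{M}\to\mathcal{M}$. The fiber product $\mathcal{M}\times_{B}\mathcal{M}\subset\mathcal{E}'$ contains the diagonal, which gives a $K$-rational section $\sigma_{0}$ of $\pi$; taking $\sigma_{0}$ as the origin turns the smooth locus $A\subset\mathcal{E}'$ into an abelian scheme over its image in $\mathcal{M}$. The off-diagonal components of $\mathcal{M}\times_{B}\mathcal{M}$ furnish further sections or multi-sections, and after possibly replacing $\mathcal{M}$ by a suitable finite cover defined over $K$, I would extract a second $K$-rational section $\sigma_{1}\neq\sigma_{0}$. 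The saliently ramified hypothesis says there is a point $b\in B$ lying in the smooth locus of $\phi$ over which two sheets of $\mathcal{M}\to B$ coalesce, so that $\sigma_{0}$ and $\sigma_{1}$ coincide at a preimage $m\in\mathcal{M}$ of $b$. The difference $\tau:=\sigma_{1}-\sigma_{0}$ is then a $K$-rational section of $\pi$ which vanishes at $m$ yet is generically non-zero.

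The crux will be to show that $\tau$ has infinite order in the Mordell--Weil group of the generic fiber of $\pi$. If $\tau$ had order $n$, it would factor through the $n$-torsion subscheme $A[n]\to\mathcal{M}$, which is finite \'etale in characteristic zero. A section of a finite \'etale cover is determined by its value at any single point of each connected component of the base, so since $\tau(m)=0$ and $m$ lies in a smooth fiber, $\tau$ would have to coincide with the zero section on the entire component of $\mathcal{M}$ through $m$, contradicting $\sigma_{1}\neq\sigma_{0}$. This is the step where the salient (rather than merely ramified) hypothesis is essential: if the ramification point lay in a singular fiber, then $A[n]$ need not be \'etale there and the argument would break down.

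With $\tau$ of infinite order, the translates $\sigma_{0}+n\tau$ for $n\in\Z$ are pairwise distinct $K$-rational sections of $\pi$. No proper subvariety of $\mathcal{E}'$ can meet a general fiber in infinitely many points, so the union of these sections is Zariski dense in $\mathcal{E}'$. Each translate is $K$-isomorphic to $\mathcal{M}$ via the group operation, hence carries a Zariski dense set of $K$-points by hypothesis; pushing forward along the finite surjection $\mathcal{E}'\to\mathcal{E}$ yields Zariski density of $\mathcal{E}(K)$. The main obstacle I anticipate is the production of a genuine $K$-rational section $\sigma_{1}$: the off-diagonal components of $\mathcal{M}\times_{B}\mathcal{M}$ are typically permuted by Galois, so an auxiliary finite base change and careful descent are needed (or, alternatively, one works with multi-sections and the Jacobian of $\pi$ throughout).
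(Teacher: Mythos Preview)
Your plan has a genuine gap at the construction of the second section $\sigma_1$. The off-diagonal components of $\mathcal{M}\times_B\mathcal{M}$ are in general permuted by $\mathrm{Gal}(\overline{K}/K)$, and your proposed remedy---passing to a finite cover of $\mathcal{M}$ over $K$---is not innocuous: such a cover need not inherit a Zariski dense set of $K$-points from $\mathcal{M}$, so the very hypothesis you need downstream can be lost. You flag this yourself and suggest ``working with the Jacobian of $\pi$ throughout'' as an alternative, but you do not carry it out, and that alternative is precisely where the content lies.

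The paper bypasses the problem by using the Galois-invariant rational map $\tau\colon\mathcal{E}\dashrightarrow\mathcal{J}$, $p\mapsto d[p]-[\mathcal{M}_{\phi(p)}]$, into the Jacobian fibration (with $d=\deg(\mathcal{M}/B)$). Salient ramification then gives $\tau(\mathcal{M})\not\subset\mathcal{J}[m]$ for any $m$; this is your \'etaleness idea, but applied to differences of coalescing local analytic sheets rather than to a global section passing through $0$---note that the trace section $d\sigma_0-\operatorname{tr}(\mathcal{M}\times_B\mathcal{M})$ need not vanish at the ramification point unless all $d$ sheets coalesce there, so your argument as written would not apply to it directly. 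The endgame also differs from yours: the paper invokes Merel's uniform boundedness of torsion to deduce that $\tau(p)$ is non-torsion for $p$ in a Zariski dense subset of $\mathcal{M}(K)$, and then lets $\mathcal{J}$ act fiberwise on $\mathcal{E}$. Your idea of translating $\sigma_0$ by multiples of a single non-torsion $K$-rational section---had you actually produced one---would have the attractive feature of avoiding Merel's theorem entirely.
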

\begin{proof}
We sketch the proof for the convenience of the reader.
We may assume that $\phi\colon \mathcal{E}\rightarrow B$ is smooth by shrinking $B$,
and let $\phi_{\mathcal{J}}\colon \mathcal{J}=\Pic^{0}_{\mathcal{E}/B}\rightarrow B$ be the corresponding Jacobian fibration
(see \cite[Chapter 8, 9]{BLR} for the basic properties),
which is an abelian scheme.
We consider a family of line bundles of degree $0$
\[d\Delta_{\mathcal{E}}-\mathcal{E}\times_{B}\mathcal{M}\in \Pic(\mathcal{E}\times_{B}\mathcal{E})/\Pic(\mathcal{E})\subset \Pic_{\mathcal{E}/B}(\mathcal{E}),\]
where $d=\deg(\mathcal{M}/B)$, and let
\[
\tau\colon \mathcal{E}\rightarrow \mathcal{J}, \, p\mapsto dp - \mathcal{M}_{\phi(p)}
\]
be the induced morphism.
It is direct to see that $\tau$ is \'etale finite.
Then $\tau(\mathcal{M})$ cannot be contained in the $m$-torsion part $\mathcal{J}[m]$ for any positive integer $m$,
otherwise $\phi|_{\mathcal{M}}\colon \mathcal{M}\rightarrow B$ would be smooth, which contradicts the assumption.
Now Merel's theorem (or Mazur's theorem when $K=\Q$) implies 
that there exists a non-empty Zariski open subset $U\subset B$ such that 
for any $b\in \phi(\mathcal{M}(K))\cap U$ 
there exists a rational point $p_{b}\in \mathcal{M}(K)\cap\mathcal{E}_{b}$
such that $\tau(p_{b})$ is non-torsion on the fiber $\mathcal{J}_{b}$.
Finally, the fiberwise action of the Jacobian fibration on $\mathcal{E}$ translates rational points on $\mathcal{M}$, which concludes the proof.
\end{proof}

We start from constructing a smooth hypersurface $X_{1}\subset \P^{1}\times \P^{1}$ of bi-degree $(2,2)$ defined over $\Q$ such that $X_{1}(\Q)$ is Zariski dense in $X_{1}$.
We set
\[
\P^{1}\times \P^{1}=\Proj \Q[S_{1},T_{1}]\times \Proj \Q[S_{2},T_{2}].
\]
For instance, we can take $X_{1}$ to be the hypersurface defined by the equation
\[
S_{1}^{2}S_{2}T_{2}+S_{1}T_{1}(S_{2}^{2}+2S_{2}T_{2}+2T_{2}^{2})+T_{1}^{2}T_{2}(2S_{2}+2T_{2})=0.
\] 
Then $X_{1}$ is an elliptic curve with a non-torsion point defined over $\Q$, hence with a Zariski dense set of $\Q$-rational points, which can be directly seen as follows.
It is immediate to see that $X_{1}$ is smooth.
Moreover, if we let
\[
O=([1:0],[1:0]), P=([4:-1],[1:1])  \in X_{1}(\Q),
\]
then $P-O\in \Pic^{0}(X_{1})$ is of infinite order.
Indeed,
setting
\[
\P^{2}=\Proj\Q[U, V, W],
\]
the rational map $\P^{1}\times \P^{1}\dashrightarrow \P^{2}$ given by
\[
U=S_{1}T_{2}+T_{1}S_{2}, \, V=S_{1}T_{2}-T_{1}S_{2}, \, W=S_{1}T_{2}+T_{1}S_{2}+2T_{1}T_{2}
\]
transforms $X_{1}$ into its Weierstrass model
\[
V^{2}W=U^{3}-UW^{2}+W^{3},
\]
which is, in terms of inhomogeneous coordinates $u=U/W, v=V/W$,
\[
v^{2}=u^{3}-u+1.
\]
It is now straightforward to see that $O$ (resp. $P$) is sent to the point at infinity (resp. $(u,v)=(3,5)$),
where the latter defines a point of infinite order by a theorem of Lutz and Nagell \cite[VIII. Corollary 7.2]{S}
as the $v$-value $5$ is non-zero and $5^{2}$ does not divide $4\cdot (-1)^{3}+27\cdot 1^{2}=23$.
The claim follows.

For $n>1$, we set
\[
(\P^{1})^{n+1}=\Proj \Q[S_{1},T_{1}]\times \cdots \times \Proj \Q[S_{n+1},T_{n+1}]
\]
and inductively define $X_{n} \subset (\P^{1})^{n+1}$ to be a general hypersurface of multi-degree $(2,\cdots, 2)$ defined over $\Q$ containing $X_{n-1}$ as the fiber of the projection $pr_{n+1}\colon X_{n}\rightarrow \P^{1}$ over $T_{n+1}=0$.

\begin{lem}\label{l0}
The hypersurface $X_{n}$ is smooth and $X_{n-1}$ is a saliently ramified multi-section of the elliptic fibration 
 given by the projection 
$pr_{1,\cdots, n-1}\colon X_{n}\rightarrow (\P^{1})^{n-1}$ 
onto the first $n-1$ factors.
\end{lem}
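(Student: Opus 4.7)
The plan is to argue by induction on $n$, addressing smoothness of $X_n$ first and then using it in the multi-section analysis. Writing the defining polynomial of $X_n$ as
\[
F \;=\; A\,S_{n+1}^{2}+B\,S_{n+1}T_{n+1}+C\,T_{n+1}^{2},
\]
where $A$ is the fixed multi-degree $(2^{n})$ polynomial defining $X_{n-1}$ and $B,C$ are general multi-degree $(2^{n})$ polynomials in $(S_{1},T_{1},\ldots,S_{n},T_{n})$, one checks that the base locus of the linear system parametrized by $(B,C)$ is exactly the copy of $X_{n-1}$ sitting inside the divisor $\{T_{n+1}=0\}\cong(\P^{1})^{n}$. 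Bertini then yields smoothness of a general $X_n$ away from this base locus; along $X_{n-1}$, a direct partial derivative computation at a point $(q,[1{:}0])$ with $q\in X_{n-1}$ shows that the partials of $F$ with respect to the first $n$ coordinate pairs reduce to the partials of $A$ at $q$, so smoothness of $X_n$ at such a point follows from smoothness of $X_{n-1}$, which is inductively available (the base case $n=1$ being the explicit smooth curve $X_1$ given in the text).

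For the multi-section assertion, $X_{n-1}$ has degree two in the variable pair $(S_{n},T_{n})$, so the restriction of $pr_{1,\ldots,n-1}$ to $X_{n-1}$ is a finite surjective morphism of degree two onto $(\P^{1})^{n-1}$, making $X_{n-1}$ a degree-$2$ multi-section of the elliptic fibration $pr_{1,\ldots,n-1}\colon X_n\to(\P^{1})^{n-1}$, whose generic fiber is a smooth $(2,2)$-curve in $\P^{1}\times\P^{1}$. Writing $A=A_{0}S_{n}^{2}+A_{1}S_{n}T_{n}+A_{2}T_{n}^{2}$ with each $A_{i}$ of multi-degree $(2^{n-1})$ in the first $n-1$ variable pairs, the branch divisor of this double cover is cut out in $(\P^{1})^{n-1}$ by the discriminant $A_{1}^{2}-4A_{0}A_{2}$; since at the previous induction step the coefficients $A_1,A_2$ played the role of the general parameters (the ``$B$'' and ``$C$'' used in constructing $X_{n-1}$), this discriminant does not vanish identically, so the branch locus is a non-empty hypersurface.

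Finally, for the saliently ramified condition I would pick any $\bar a\in(\P^{1})^{n-1}$ lying in the branch locus of $X_{n-1}$ and argue that, for a general choice of $(B,C)$ at the present step, the elliptic fiber of $X_n$ over $\bar a$ is smooth. This fiber is the $(2,2)$-curve in $\P^{1}\times\P^{1}$ defined by
\[
A(\bar a,\cdot)\,S_{n+1}^{2}+B(\bar a,\cdot)\,S_{n+1}T_{n+1}+C(\bar a,\cdot)\,T_{n+1}^{2}=0,
\]
and its smoothness depends only on the values $B(\bar a,\cdot),C(\bar a,\cdot)$, which run over arbitrary binary forms of degree two in $(S_{n},T_{n})$ as $(B,C)$ vary. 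Since a generic $(2,2)$-form defines a smooth curve, the discriminant locus of the elliptic fibration avoids $\bar a$ for generic $(B,C)$, producing the required ramification point of $X_{n-1}$ lying over a smooth elliptic fiber. The main obstacle is organizing the genericity hypotheses coherently across the induction: the non-vanishing of $A_{1}^{2}-4A_{0}A_{2}$ is a Zariski-open condition on the step-$(n-1)$ parameters, while smoothness of $X_n$ and smoothness of the specific fiber over $\bar a$ are Zariski-open conditions on the step-$n$ parameters $(B,C)$, and all of these can be imposed simultaneously.
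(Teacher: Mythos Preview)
Your smoothness argument matches the paper's: both use Bertini away from $X_{n-1}$ and then check along $X_{n-1}$ directly (the paper phrases this via flatness of $pr_{n+1}$, but it is the same observation).

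For the saliently ramified part, your route is different from the paper's and is essentially sound, but two points need correction. First, the restriction $pr_{1,\ldots,n-1}\colon X_{n-1}\to(\P^{1})^{n-1}$ is \emph{not} finite when $n>3$: the locus $A_{0}=A_{1}=A_{2}=0$ in $(\P^{1})^{n-1}$ has expected dimension $n-4$, and over such points the fibre is all of $\P^{1}$. The paper explicitly flags this. You should say ``generically finite of degree two''; nothing else in your argument actually uses finiteness, so this is cosmetic once fixed. Second, and more substantively, your sentence ``Since a generic $(2,2)$-form defines a smooth curve'' does not justify what you need. The $(2,2)$-form $A(\bar a,\cdot)S_{n+1}^{2}+B(\bar a,\cdot)S_{n+1}T_{n+1}+C(\bar a,\cdot)T_{n+1}^{2}$ lives in the affine subspace where the $S_{n+1}^{2}$-coefficient is the fixed perfect square $A(\bar a,\cdot)$, so you must check that a generic member of \emph{this} family is smooth. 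This is true (for instance, the base locus of the family is the single point $\{A(\bar a,\cdot)=0\}\times\{T_{n+1}=0\}$, and there the curve is smooth as soon as $B(\bar a,\cdot)$ does not vanish at the double root of $A(\bar a,\cdot)$), but it does require a one-line verification beyond the bare appeal to genericity of $(2,2)$-forms.

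By contrast, the paper argues globally: it lets $\Sigma\subset X_{n}$ be the critical locus of $pr_{1,\ldots,n-1}$, writes down the equations $2S_{n}F_{1}+T_{n}F_{2}=S_{n}F_{2}+2T_{n}F_{3}=G=0$ for $\Sigma\cap X_{n-1}$, and shows this has dimension $n-3$, one less than the ramification locus of $X_{n-1}\to(\P^{1})^{n-1}$; hence a general ramification point lies in a smooth fibre. Your approach trades this dimension count for a pointwise Bertini argument, which is arguably cleaner but needs the small check above to be complete.
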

\begin{proof}
For smoothness, it is enough to show that $X_{n}$ is smooth around $X_{n-1}$.
This is obvious because $X_{n-1}$ is a fiber of the flat proper morphism $pr_{n+1}\colon X_{n}\rightarrow \P^{1}$ and $X_{n-1}$ is smooth by induction.
We are reduced to showing the second assertion.

Let $B\subset (\P^{1})^{n-1}$ be the branch locus, that is, the set of critical values of the composition $X_{n-1}\hookrightarrow X_{n}\xrightarrow{pr_{1,\cdots, n-1}} (\P^{1})^{n-1}$.
We note that this morphism is generically finite, but not finite when $n>3$.
We only need to prove that the fiber of $pr_{1,\cdots, n-1}\colon X_{n}\rightarrow (\P^{1})^{n-1}$ over a general point in $B$ is smooth.
Let $\Sigma\subset X_{n}$ be the set of critical points of $pr_{1,\cdots, n-1}\colon X_{n}\rightarrow (\P^{1})^{n-1}$.
By generality, it is sufficient to show that $\dim \Sigma\cap X_{n-1}=n-3$.
This can be checked by a direct computation as follows.
The equation of $X_{n}\subset (\P^{1})^{n+1}$ can be written as
\[
S_{n+1}^{2}F+S_{n+1}T_{n+1}G+T_{n+1}^{2}H=0
\]
for some multi-homogeneous polynomials $F, G, H$ in $S_{1}, T_{1},\cdots, S_{n}, T_{n}$ of multi-degree $(2,\cdots ,2)$,
where $F=0$ defines $X_{n-1}\subset (\P^{1})^{n}$.
If we moreover write $F$ as
\[F=S_{n}^{2}F_{1}+S_{n}T_{n}F_{2}+T_{n}^{2}F_{3}
\]
for some multi-homogeneous polynomials $F_{1},F_{2},F_{3}$ in $S_{1},T_{1},\cdots, S_{n-1}, T_{n-1}$ of multi-degree $(2,\cdots, 2)$,
then the set $\Sigma\cap X_{n-1}$ is defined by
\[
T_{n+1}=2S_{n}F_{1}+T_{n}F_{2}=S_{n}F_{2}+2T_{n}F_{3}=G=0.
\]
The first three equations define the ramification locus, that is, the set of critical points of the composition $X_{n-1}\hookrightarrow X_{n}\xrightarrow{pr_{1,\cdots, n-1}} (\P^{1})^{n-1}$,
which is of dimension $n-2$,
thus the four equations together define a closed subset of dimension $n-3$ by generality, as we wanted.
The proof is complete.
\end{proof}

Now Theorem \ref{BT} implies:

\begin{thm}\label{t2}
The set $X_{n}(\Q)$ is Zariski dense in $X_{n}$ for any $n\geq 1$.
\end{thm}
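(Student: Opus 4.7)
The plan is to prove this by induction on $n$, using Theorem \ref{BT} to pass from $X_{n-1}$ to $X_n$. All the geometric content has already been isolated in Lemma \ref{l0}, so the proof itself should be mostly bookkeeping.

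The base case $n=1$ is already handled by the explicit construction: $X_1$ is an elliptic curve over $\Q$ with a non-torsion $\Q$-rational point, so its Mordell--Weil group has positive rank and $X_1(\Q)$ is Zariski dense.

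For the inductive step, assume that $X_{n-1}(\Q)$ is Zariski dense. Consider the morphism $\phi = pr_{1,\dots,n-1} \colon X_n \to (\P^1)^{n-1}$. The general fiber is a smooth $(2,2)$-hypersurface in $\P^1 \times \P^1$, i.e., a smooth genus one curve, so $\phi$ is an elliptic fibration in the sense of Theorem \ref{BT}. Moreover, by Lemma \ref{l0}, the subvariety $X_{n-1} \subset X_n$ is a saliently ramified multi-section of $\phi$. Combining this with the inductive hypothesis that $X_{n-1}(\Q)$ is Zariski dense, Theorem \ref{BT} (applied with $\mathcal{E} = X_n$, $B = (\P^1)^{n-1}$, $\mathcal{M} = X_{n-1}$, $K = \Q$) immediately yields that $X_n(\Q)$ is Zariski dense, closing the induction.

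There is no real obstacle to overcome here, since Lemma \ref{l0} has already done the geometric work (smoothness of $X_n$, salient ramification of $X_{n-1}$ as a multi-section) and Theorem \ref{BT} supplies the arithmetic step via Mazur/Merel. If anything, the only thing to double-check is that the hypotheses of Theorem \ref{BT} are met in this non-relative setting with higher-dimensional base $(\P^1)^{n-1}$, but this is exactly the generality in which Bogomolov--Tschinkel stated their result, so it applies verbatim.
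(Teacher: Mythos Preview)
Your proof is correct and matches the paper's own argument: the paper simply writes ``Now Theorem \ref{BT} implies'' after establishing Lemma \ref{l0} and the base case, which is exactly the induction you have spelled out.
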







\begin{rem}
For a smooth hypersurface $X$ in $(\P^{1})^{n+1}$ of multi-degree $(2,\cdots, 2)$, 
the birational automorphism group $\Bir(X_{\C})$ is infinite by Cantat-Oguiso \cite[Remark 3.4]{CO}.
It would be possible to give a proof of the density of rational points by using the action of $\Bir(X_{\C})$.
\end{rem}

\section{Calabi-Yau threefolds containing an Enriques surface}\label{CY3}
In this section, we work over the complex numbers.
We construct a Calabi-Yau threefold containing an Enriques surface and prove basic properties of the threefold, which will be used in Section \ref{C3}.

Let
$\Pi=\P_{\P^{2}}(\mathcal{O}^{\oplus 3}\oplus \mathcal{O}(1))$. 
On the projective bundle 
$\Pi$,
we consider a map of vector bundles
\[
u\colon \O^{\oplus 3}\rightarrow \O(2H_{1})\oplus \O(2H),
\]
where $H_{1}$ (resp. $H$) is the pull-back of the hyperplane section class on $\P^{2}$ (resp. the tautological class on $\Pi$).
Let $X$ be the rank one degeneracy locus of $u$.

\begin{lem}\label{l1}
If $u$ is general, $X$ is a Calabi-Yau threefold.
We have the topological Euler characteristic $\chi_{\Top}(X)=c_{3}(T_{X})=-84$ and the Hodge numbers $h^{1,1}(X)=2$ and $h^{1,2}(X)=44$.
\end{lem}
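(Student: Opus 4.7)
The plan is to realize $X$ as the isomorphic image of a smooth complete intersection $\tilde X$ inside the $\P^{1}$-bundle $\pi\colon\P(E)\to\P$, where $E=\O(2H_{1})\oplus\O(2H)$. In Grothendieck's convention so that $\pi_{*}\O_{\P(E)}(1)=E$, the map $u$ corresponds to a section $\tilde u\in H^{0}(\P(E),\O_{\P(E)}(1)^{\oplus 3})$; over each rank-$1$ point $p$ of $u$ its zero locus picks out the unique $1$-dimensional quotient of $E_{p}$ whose kernel is the image of $u_{p}$, and contains the whole $\P^{1}$-fiber over each rank-$0$ point. Since the rank-$0$ locus of $u$ has expected codimension $3\cdot 2=6$ in the five-fold $\P$, it is empty for general $u$, so $\tilde X\to X$ is an isomorphism; Bertini applied to the globally generated bundle $\O(1)^{\oplus 3}$ then gives that $\tilde X$ is a smooth threefold.

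To verify $\omega_{X}\cong\O_{X}$, adjunction on $\tilde X\subset\P(E)$ combined with the projective-bundle formulas yields
\[
\omega_{\tilde X}=\O(H_{\tau}-2H)|_{\tilde X},
\]
where $H_{\tau}:=c_{1}(\O_{\P(E)}(1))$. The inclusion $\O(2H)\hookrightarrow E$ of the second summand composed with the tautological quotient $\pi^{*}E\twoheadrightarrow\O(H_{\tau})$ produces a canonical global section of $\O(H_{\tau}-2H)$ on $\P(E)$ whose zero locus is the distinguished section $\sigma\colon\P\to\P(E)$ coming from $E\twoheadrightarrow\O(2H_{1})$. A dimension count shows that $\tilde X\cap\sigma(\P)$ is the common zero locus of the three sections of $\O(2H_{1})$ obtained as the first-summand components of $u$; since $\O(2H_{1})$ is pulled back from $\P^{2}$, three general such sections have empty common zero on $\P$, so the canonical section of $\O(H_{\tau}-2H)$ is nowhere-vanishing on $\tilde X$ and trivializes $\omega_{\tilde X}$. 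The vanishing $H^{i}(X,\O_{X})=0$ for $0<i<3$ then follows from the Koszul resolution of $\O_{\tilde X}$ by the bundles $\O(-kH_{\tau})$ ($k=0,1,2,3$) on $\P(E)$, combined with a routine cohomology computation of the resulting line bundles via the Leray spectral sequences for $\pi$ and $\P\to\P^{2}$.

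For the Chern numbers, I would expand
\[
c(T_{X})=\frac{(1+H_{\tau}-2H_{1})(1+H_{\tau}-2H)(1+H)^{3}(1+H-H_{1})(1+H_{1})^{3}}{(1+H_{\tau})^{3}}
\]
(derived from the two Euler sequences) to degree three and integrate against $[\tilde X]=H_{\tau}^{3}$ on $\P(E)$, using the relations $H_{\tau}^{2}=c_{1}(E)H_{\tau}-c_{2}(E)$, $H^{4}=H_{1}H^{3}$, $H_{1}^{3}=0$; direct Segre-class bookkeeping gives $\chi_{\Top}(X)=-84$. The classes $H|_{X}$ and $H_{1}|_{X}$ are independent in $\Pic(X)$, and a Noether--Lefschetz-type argument for general $u$ rules out extra classes, giving $h^{1,1}(X)=2$. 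Hodge symmetry on the Calabi--Yau $X$ then yields $h^{1,2}(X)=h^{1,1}(X)-\chi_{\Top}(X)/2=2+42=44$. The main obstacle is the triviality of $\omega_{X}$: adjunction leaves a line bundle not manifestly trivial at the $\P(E)$-Chow level, and the argument crucially exploits the asymmetry between the two distinguished sections of $\pi$---namely that $\tilde X$ misses $\sigma$ (as above) while it meets the other distinguished section in a $2$-dimensional locus, which is in fact the Enriques surface alluded to in the section title.
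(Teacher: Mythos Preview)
Your approach is essentially the paper's own: pass to the projective bundle $\widetilde{\P}=\P_{\P}(\O(2H_{1})\oplus\O(2H))$, realize $X$ as the zero locus of three sections of $\O_{\widetilde{\P}}(1)$, invoke Bertini for smoothness, compute $\omega_{X}=\O_{X}(\widetilde H-2H)$ by adjunction, and then observe that $\widetilde H-2H$ is represented by the section $\P_{\P}(\O(2H_{1}))\subset\widetilde{\P}$, which $X$ misses. Your added explanation of \emph{why} that intersection is empty---it is the common zero of the three $\O(2H_{1})$-components of $u$, i.e., three general conics pulled back from $\P^{2}$---is exactly the content behind the paper's one-line assertion. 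The vanishing of $H^{i}(\O_{X})$ via Koszul on $\widetilde{\P}$ likewise matches.

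The one point where you diverge, and where there is a genuine gap, is the computation of $h^{1,1}(X)$. You appeal to a ``Noether--Lefschetz-type argument for general $u$'' to conclude that $H_{1}|_{X}$ and $H|_{X}$ span $\Pic(X)$. But $\O_{\widetilde{\P}}(1)$ is \emph{not} ample: ampleness would require $E=\O(2H_{1})\oplus\O(2H)$ to be ample on $\P$, and $\O(2H_{1})$ is pulled back from $\P^{2}$. So the standard Lefschetz hyperplane theorem does not apply to $\tilde X\subset\widetilde{\P}$, and no off-the-shelf Noether--Lefschetz statement covers this situation without further work. (Note also that $\Pic(\widetilde{\P})$ has rank $3$, so even a Lefschetz-type isomorphism would not immediately give rank $2$; you would still need to use the relation $H_{\tau}\equiv 2H$ on $\tilde X$ coming from the triviality of $\omega_{X}$.) The paper instead obtains $h^{1,1}$ and $h^{1,2}$ by a direct computation with the conormal exact sequence and the Koszul resolution of $\I_{\tilde X}$ on $\widetilde{\P}$; this is tedious but mechanical, and you should replace the Noether--Lefschetz appeal with that computation (your Chern-class calculation of $\chi_{\Top}=-84$ then serves as a consistency check rather than an input).
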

\begin{proof}
Since the vector bundle $\O(2H_{1})\oplus \O(2H)$ is globally generated, $X$ is a smooth threefold by the Bertini theorem for degeneracy loci.
Another projective model of $X$ is defined as the zero set of a naturally defined section of $\O(1)^{\oplus 3}$ on the projective bundle 
$\widetilde{\Pi}=\P_{\Pi}(\O(2H_{1})\oplus \O(2H))$.
Let $\widetilde{H}$ be the tautological class on $\widetilde{\Pi}$.
The adjunction formula gives $\omega_{X}=\O_{X}(\widetilde{H}-2H)$.
On the other hand, $\widetilde{H}-2H$ is the class of the intersection 
$X\cap \P_{\Pi}(\O(2H_{1}))$, 
which is empty, thus we have $\O_{X}(\widetilde{H}-2H)=\O_{X}$.
It follows that $\omega_{X}=\O_{X}$.
The rest of the statement is a consequence of a direct computation using the conormal exact sequence and the Koszul resolution of the ideal sheaf of $X$ in $\widetilde{\Pi}$.
The proof is complete.
\end{proof}

We assume that $u$ is general in what follows.

\begin{lem}\label{l2}
The threefold $X$ admits an elliptic fibration $\phi \colon X\rightarrow \P^{2}$.
Moreover, $X$ contains an Enriques surface $S$ and the linear system $|2S|$ defines a K3 fibration $\psi\colon X\rightarrow \P^{1}$.
\end{lem}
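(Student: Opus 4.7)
The plan is to verify the three assertions in turn using the natural bundle structure of $\P = \P_{\P^{2}}(\mathcal{O}^{\oplus 3} \oplus \mathcal{O}(1))$. For the elliptic fibration, $\phi$ is the restriction of the natural projection $\pi\colon \P \to \P^{2}$: on a fiber $\pi^{-1}(p) \cong \P^{3}$ the map $u$ becomes $\mathcal{O}^{\oplus 3} \to \mathcal{O} \oplus \mathcal{O}_{\P^{3}}(2)$, which as a $2 \times 3$ matrix has a row of scalars and a row of quadrics; after a generic change of basis normalizing the scalar row, the rank-one degeneracy is the common vanishing of two independent quadrics in $\P^{3}$, a smooth elliptic curve for generic $p$.

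For the Enriques surface, consider the sub-bundle $\P(\mathcal{O}^{\oplus 3}) = \P^{2} \times \P^{2} \hookrightarrow \P$, a divisor of class $H - H_{1}$, and set $S := X \cap \P(\mathcal{O}^{\oplus 3})$. The restriction of $u$ to $\P^{2} \times \P^{2}$ is a map $\mathcal{O}^{\oplus 3} \to F := \mathcal{O}(2, 0) \oplus \mathcal{O}(0, 2)$ whose rank-zero locus has expected codimension $6 > 4$, hence empty. Consequently, $S$ is resolved as the zero locus in the $\P^{1}$-bundle $\P(F)$ of a section of $\mathcal{O}_{\P(F)}(1)^{\oplus 3}$, and is a smooth surface by Bertini. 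Adjunction in $\P(F)$ yields $\omega_{S} = (\pi^{*}\mathcal{O}(-1, -1) \otimes \mathcal{O}_{\P(F)}(1))|_{S}$. To confirm $S$ is Enriques I would check: (i) $\omega_{S}^{\otimes 2} = \mathcal{O}_{S}$, since the line bundle $\pi^{*}\mathcal{O}(-2, -2) \otimes \mathcal{O}_{\P(F)}(2)$ on $\P(F)$ (which restricts to $\omega_{S}^{\otimes 2}$) has a unique section up to scalar, given by the sum of the two coordinate sections of $\P(F) \to \P^{2} \times \P^{2}$, and this divisor is disjoint from $S$ for general $u$ (a general triple of quadrics on $\P^{2}$ has no common zero); (ii) $h^{0}(S, \omega_{S}) = 0$ by a Koszul cohomology computation, so that $\omega_{S}$ is non-trivially $2$-torsion; (iii) $h^{1}(S, \mathcal{O}_{S}) = 0$ by a parallel Koszul computation. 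With $\chi(\mathcal{O}_{S}) = 1$ and $\omega_{S}$ non-trivially $2$-torsion, the classification of surfaces forces $S$ to be an Enriques surface.

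For the K3 fibration, adjunction on $X$ gives $\omega_{S} = N_{S/X}$, so $\mathcal{O}_{X}(2S)|_{S} = \omega_{S}^{\otimes 2} = \mathcal{O}_{S}$. The exact sequence
\[
0 \to \mathcal{O}_{X}(S) \to \mathcal{O}_{X}(2S) \to \mathcal{O}_{S} \to 0,
\]
combined with $h^{0}(\mathcal{O}_{X}(S)) = 1$ (rigidity of $S$) and $H^{1}(\mathcal{O}_{X}(S)) = 0$ (via the Eagon--Northcott-type resolution of $\mathcal{O}_{X}$ in $\P$), gives $h^{0}(\mathcal{O}_{X}(2S)) = 2$, so $|2S|$ is a pencil; base-point-freeness holds because the canonical trivialization of $\mathcal{O}_{X}(2S)|_{S}$ lifts to a section whose divisor $T$ is disjoint from $S$. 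A general fiber $T$ of $\psi\colon X \to \P^{1}$ is then smooth with $\omega_{T} = \mathcal{O}_{T}$ by adjunction, and $T$ is K3 rather than abelian because $T$ arises as the \'etale K3 double cover of $S$ (the fiber $2S$ being the degeneration of this cover); equivalently, $h^{1}(T, \mathcal{O}_{T}) = 0$ by a Leray argument using $h^{1}(X, \mathcal{O}_{X}) = 0$. The main obstacle is the Enriques identification of $S$, and specifically the non-triviality of the $2$-torsion $\omega_{S}$: the Koszul computation of $h^{0}(\omega_{S}) = 0$ is the most delicate step, requiring careful bookkeeping with pushforwards along the $\P^{1}$-bundle $\P(F) \to \P^{2} \times \P^{2}$.
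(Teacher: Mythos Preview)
Your approach is structurally identical to the paper's: the elliptic fibration is the restriction of $\P \to \P^{2}$, the Enriques surface is $X \cap \P_{\P^{2}}(\mathcal{O}^{\oplus 3})$, and the K3 fibration is $|2S|$. The difference is that the paper outsources the two nontrivial verifications to the literature---it cites \cite[Lemma~2.1]{OS} for the fact that the degeneracy locus of a general $v\colon \mathcal{O}^{\oplus 3} \to \mathcal{O}(2,0)\oplus\mathcal{O}(0,2)$ on $\P^{2}\times\P^{2}$ is an Enriques surface, and \cite[Proposition~8.1]{BN} for the fact that $|2S|$ is a base-point-free K3 pencil on any Calabi--Yau threefold containing an Enriques surface $S$---whereas you supply direct adjunction/Koszul arguments for both. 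Your arguments are correct and self-contained, which is a genuine advantage; the paper's version is shorter but depends on those references. Two small remarks: in your Enriques step, the unique section of $\pi^{*}\mathcal{O}(-2,-2)\otimes\mathcal{O}_{\P(F)}(2)$ vanishes on the \emph{union} (not ``sum'') of the two coordinate sections of $\P(F)$, corresponding to the product of the two tautological linear forms; and in the K3 step, your first justification (that a general fiber $T$ is literally the K3 double cover of $S$) is not obvious as stated, but your alternative argument via $h^{1}(T,\mathcal{O}_{T})=0$ is clean and suffices---indeed $h^{1}(\mathcal{O}_{X}(2S))=0$ follows from the same short exact sequence you already used, since $h^{1}(\mathcal{O}_{X}(S))=h^{1}(\mathcal{O}_{S})=0$.
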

\begin{rem}
There are several examples of Calabi-Yau threefolds containing an Enriques surface.
For instance, see Borisov-Nuer \cite{BN}.
\end{rem}
\begin{proof}[Proof of Lemma \ref{l2}]
A natural projection 
$\Pi\rightarrow \P^{2}$
restricts to a surjection $\phi\colon X\rightarrow \P^{2}$ with the geometric generic fiber a complete intersection of two quadrics in $\P^{3}$, which is an elliptic curve.
The morphism $\phi$ has equidimensional fibers, hence it is flat.

Moreover, the map $u$ restricts to a map of vector bundles
\[
v\colon \O^{\oplus 3}\rightarrow \O(2,0)\oplus \O(0,2)
\]
on $\P_{\P^{2}}(\mathcal{O}^{\oplus 3})=\P^{2}\times \P^{2}$.
The intersection $X\cap \P_{\P^{2}}(\mathcal{O}^{\oplus 3})$ is the rank one degeneracy locus of $v$, which is an Enriques surface $S$
 by generality 
 (see \cite[Lemma 2.1]{OS}).
Then the linear system $|2S|$ defines a K3 fibration $\psi\colon X\rightarrow \P^{1}$ by \cite[Proposition 8.1]{BN}.
The proof is complete.
\end{proof}

\begin{cor}
The threefold $X$ contains no abelian surface.
\end{cor}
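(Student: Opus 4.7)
Suppose for contradiction $A\subset X$ is an abelian surface. The strategy is to use the small Picard rank $h^{1,1}(X)=2$ together with the triviality of the normal bundle $N_{A/X}$ (adjunction plus $\omega_X=\O_X$ and $\omega_A=\O_A$ gives $A|_A=\O_A$) to pin down $[A]\in\NS(X)$ up to finitely many numerical possibilities, and then to rule each one out using $|S|$ and a flatness computation for $\psi$.

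For the numerical step I would take $L_1:=\phi^*\O_{\P^2}(1)$ and $S$ as $\Q$-generators of $\NS(X)$. Since $F=2S$ is a $\psi$-fiber class, the triple intersections $L_1^3$, $L_1\cdot S^2$, and $S^3$ all vanish, while $L_1^2\cdot S\neq 0$: otherwise a general $\phi$-fiber would lie in a $\psi$-fiber and $\psi$ would factor through $\phi$, but every morphism $\P^2\to\P^1$ is constant. Writing $[A]=aL_1+cS$, the conditions $[A]^2\cdot L_1=[A]^2\cdot S=0$ (consequences of $A|_A=\O_A$) then force $a=0$. Using $\Pic(X)=\NS(X)$ (simple connectedness) and the primitivity of $S$ in $\NS(X)$ for general $u$, one obtains $[A]=cS$ with $c\in\Z_{>0}$ and $A\sim cS$.

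To bound $c$, I would restrict $A$ to a general $\psi$-fiber $F_t$: on the K3 surface $F_t$ the line bundle $\O_X(cS)|_{F_t}$ has $c_1=0$ and so is trivial (since $\Pic^0(F_t)=0$), so the effective divisor $A\cap F_t$ must be empty. Thus $\psi(A)$ is a single point $t^*$, and $A$ is an irreducible component of $F_{t^*}$ with some multiplicity $m\geq 1$. Writing $F_{t^*}=mA+R$ with $R$ effective, the numerical identity $[R]=(2-mc)[S]$ combined with $R\cdot L_1^2\geq 0$ (as $L_1^2$ is a moving curve class) yields $mc\leq 2$, leaving only $c\in\{1,2\}$.

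For $c=1$ the short exact sequence $0\to\O_X\to\O_X(S)\to\omega_S\to 0$ combined with $h^0(\omega_S)=0$ on the Enriques surface $S$ and $h^1(\O_X)=0$ gives $h^0(X,S)=1$, so $|S|=\{S\}$ and $A=S$, contradicting that $S$ is Enriques while $A$ is abelian. For $c=2$, $A\in|2S|$ is a scheme-theoretic fiber of $\psi$, so by flatness $\chi(\O_A)=\chi(\O_{\mathrm{K3}})=2$, contradicting $\chi(\O_A)=0$ for an abelian surface. The main technical point is the numerical pinning in the second paragraph, specifically the nonvanishing $L_1^2\cdot S\neq 0$, which reflects the genuine incompatibility of the fibrations $\phi$ and $\psi$, together with the primitivity of $[S]$ in $\NS(X)$ under the generality hypothesis on $u$.
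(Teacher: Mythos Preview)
Your proof is correct but takes a substantially different and longer route than the paper's. The paper argues in three lines: if $A\subset X$ is an abelian surface, then (since $N_{A/X}=\O_A$ and $h^1(\O_X)=0$) the linear system $|A|$ is a free pencil, giving an abelian-surface fibration $\eta\colon X\to\P^1$; as the general fibers of $\psi$ and $\eta$ are non-isomorphic (K3 vs.\ abelian), the map $(\psi,\eta)\colon X\to\P^1\times\P^1$ is surjective, so pulling back $\NS(\P^1\times\P^1)_\R$ yields a $2$-dimensional subspace of $\NS(X)_\R$ containing no ample class, forcing $\rho(X)\ge 3$ and contradicting $h^{1,1}(X)=2$. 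This uses only Lemmas~\ref{l1} and~\ref{l2}. Your approach instead pins down $[A]$ explicitly in the basis $L_1,S$ and eliminates the two surviving cases by hand; this works, but it relies on simple connectedness (Proposition~\ref{p}, proved later) to upgrade numerical to linear equivalence, and on the primitivity of $[S]$, which you assert without proof (it is true: one checks $[S]=H-H_1$ in the Lefschetz basis $\Z H_1\oplus\Z H$ of $\Pic(X)$). The upside of your argument is that it avoids the general principle that an abelian surface in a Calabi--Yau threefold moves in a pencil and is entirely self-contained once those two facts are supplied; the paper's argument is cleaner because $\psi$ is already available and the Picard rank is already known.
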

\begin{proof}
If $X$ contains an abelian surface $A$, then it is easy to see that the linear system $|A|$ defines an abelian surface fibration $\eta \colon X\rightarrow \P^{1}$.
Then pulling back $\NS(\P^{1}\times \P^{1})_{\R}$ by $(\psi, \eta)\colon X\rightarrow \P^{1}\times \P^{1}$ defines a two-dimensional linear subspace of $\NS(X)_{\R}$ which does not contain an ample divisor.
Therefore we should have $\rho(X)\geq 3$.
This is impossible since we have $\rho(X)=h^{1,1}(X)=2$ by Lemma \ref{l1}.
\end{proof}

\begin{lem}\label{l3}
$\Nef(X)=\overline{\Eff(X)}=\R_{\geq 0}[H_{1}]\oplus \R_{\geq 0}[S]$.
\end{lem}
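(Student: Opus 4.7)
The plan is to identify $H_{1}$ and $S$ as the two extremal rays of both cones, by combining nefness with cubic intersection computations, and then using movability of curve classes to bound the effective cone from above.

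First I would verify that $H_{1}$ and $S$ are both nef and effective. Since $H_{1}=\phi^{*}\O_{\P^{2}}(1)$, it is nef and is represented by the prime divisor $\phi^{-1}(\ell)$ for a line $\ell\subset\P^{2}$. By Lemma \ref{l2}, $2S$ is the class of a fiber $F_{\psi}$ of $\psi$, so $2S$ (hence $S$) is nef, and $S$ itself is an effective prime divisor. This yields the inclusion $\R_{\geq 0}[H_{1}]\oplus\R_{\geq 0}[S]\subseteq \Nef(X)\cap\overline{\Eff(X)}$.

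Next I would compute all triple intersection numbers on $X$. Since $H_{1}$ is pulled back from the surface $\P^{2}$, $H_{1}^{3}=0$. Since two general fibers of $\psi$ are disjoint, $F_{\psi}^{2}=0$ as a $1$-cycle class, so $S^{3}=0$ and $S^{2}\cdot H_{1}=0$. Finally $S\cdot H_{1}^{2}=d$, where $d$ is the degree of $\phi|_{S}:S\rightarrow\P^{2}$; a direct computation (the fiber of $\phi$ over a general $p\in\P^{2}$ is an elliptic quartic curve in $\P^{3}=\P_{p}$, and $S_{p}$ is its hyperplane section) yields $d=4>0$. In particular $H_{1}$ and $S$ are linearly independent in $N^{1}(X)_{\R}$, and neither is ample by Nakai--Moishezon. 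Since $\rho(X)=h^{1,1}(X)=2$ by Lemma \ref{l1}, the $2$-dimensional closed convex cone $\Nef(X)$ must be bounded precisely by $\R_{\geq 0}[H_{1}]$ and $\R_{\geq 0}[S]$, giving $\Nef(X)=\R_{\geq 0}[H_{1}]\oplus\R_{\geq 0}[S]$.

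For the effective cone, I would write an arbitrary pseudo-effective class as $D=aH_{1}+bS$ and show $a,b\geq 0$ by pairing against two movable $1$-cycle classes. From the intersection numbers above, $D\cdot H_{1}^{2}=bd$ and $D\cdot H_{1}\cdot(2S)=2ad$. The class $H_{1}^{2}$ is represented by a general fiber of $\phi$, and these fibers sweep out $X$ since $\phi$ is surjective with equidimensional fibers; hence $H_{1}^{2}$ is movable and $bd\geq 0$ gives $b\geq 0$. Similarly, $H_{1}\cdot(2S)$ is represented by the curve $\phi^{-1}(\ell)\cap\psi^{-1}(p)$ for a general line $\ell\subset\P^{2}$ and point $p\in\P^{1}$; since any $x\in X$ lies on such a curve by choosing $p=\psi(x)$ and $\ell\ni\phi(x)$, this family covers $X$, so the class is also movable and $2ad\geq 0$ gives $a\geq 0$. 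This yields $\overline{\Eff(X)}\subseteq\R_{\geq 0}[H_{1}]\oplus\R_{\geq 0}[S]$, which combined with the opposite inclusion completes the proof. The subtlest point is ensuring $d>0$, i.e.\ that $\phi|_{S}$ is genuinely surjective and generically finite; this follows from the explicit complete-intersection description of the elliptic fiber $X_{p}$ and its transverse hyperplane section $S_{p}$.
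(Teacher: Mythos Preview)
Your argument is correct, but the paper's proof is considerably shorter and organized differently. The paper observes in one stroke that $H_{1}$ and $S$ are semi-ample (hence nef and pseudoeffective) but not big, since $|H_{1}|$ and $|2S|$ define genuine fibrations; a nef non-big class sits on the boundary of $\overline{\Eff(X)}$, so with $\rho(X)=2$ these two independent classes already span the extremal rays of $\overline{\Eff(X)}$, and then $\Nef(X)$ is sandwiched between $\R_{\geq 0}[H_{1}]\oplus\R_{\geq 0}[S]$ and $\overline{\Eff(X)}$.

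You instead first pin down $\Nef(X)$ via ``nef but not ample'' together with $\rho=2$, and then separately bound $\overline{\Eff(X)}$ from above by pairing against the movable curve classes $H_{1}^{2}$ and $H_{1}\cdot(2S)$. This works and has the side benefit of producing the explicit intersection numbers (in particular $S\cdot H_{1}^{2}=4$), but it is more laborious: the movable-curve step and the computation of $d$ are unnecessary once one uses that ``not big'' already places $H_{1}$ and $S$ on $\partial\,\overline{\Eff(X)}$. In short, both routes rely on the two fibrations and $\rho=2$; the paper extracts the effective cone directly from non-bigness, whereas you recover it by duality with covering curves.
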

\begin{proof}
Since the linear systems $|H_{1}|$ and $|2S|$ define the fibrations $\phi\colon X\rightarrow \P^{2}$ and $\psi\colon X\rightarrow \P^{1}$ respectively,
the divisors $H_{1}$ and $S$ are semi-ample but not big, so their classes give extremal rays in $\overline{\Eff(X)}$.
This finishes the proof.
\end{proof}

\begin{cor}
The threefold $X$ is a unique minimal model in its birational equivalence class.
Moreover, $\Bir(X)=\Aut(X)$ and these groups are finite.
\end{cor}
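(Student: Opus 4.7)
The strategy is to derive all three conclusions from Lemma \ref{l3}. For uniqueness of the minimal model, I note that $\omega_{X} = \O_{X}$ is nef, so $X$ itself is a minimal model, and any other smooth minimal model birational to $X$ is obtained by a finite sequence of flops. Each flop corresponds to a wall in the interior of the movable cone $\overline{\mathrm{Mov}(X)}$ separating distinct nef chambers. Since one always has $\Nef(X) \subseteq \overline{\mathrm{Mov}(X)} \subseteq \overline{\Eff(X)}$, Lemma \ref{l3} forces all three cones to coincide, so no flopping wall exists and $X$ is the unique minimal model in its birational equivalence class.

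To prove $\Bir(X) = \Aut(X)$, I take $\varphi \in \Bir(X)$ and resolve its indeterminacy by $f, g \colon Z \to X$. The Calabi-Yau condition gives $K_{Z} = f^{*} K_{X} + E_{f} = E_{f}$ and $K_{Z} = E_{g}$ with $E_{f}, E_{g}$ effective and exceptional for $f, g$ respectively, whence $E_{f} = E_{g}$; a routine argument then shows that $\varphi$ contracts no divisor, i.e., is an isomorphism in codimension one. Therefore $\varphi^{*}$ acts on $\NS(X)$ preserving $\overline{\Eff(X)} = \Nef(X)$ and hence also its interior, the ample cone. For any ample class $A$, $\varphi^{*}A$ is ample, and the induced isomorphism of section rings $R(X, mA) \xrightarrow{\sim} R(X, m\varphi^{*}A)$ for $m \gg 0$ extends $\varphi$ to an automorphism upon taking $\Proj$.

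For finiteness, I analyze the natural homomorphism $\rho \colon \Aut(X) \to \mathrm{GL}(\NS(X))$. Its image preserves $\Nef(X) = \R_{\geq 0}[H_{1}] \oplus \R_{\geq 0}[S]$, a strictly convex rational polyhedral cone with two distinct primitive extremal ray generators, hence must permute $\{[H_{1}], [S]\}$ and is in particular finite. The kernel of $\rho$ fixes every class in $\Pic(X) = \NS(X)$ (using $\Pic^{0}(X) = 0$, which follows from $h^{1}(X, \O_{X}) = 0$), so it sits inside $\Aut(X, L)$ for some ample $L$; by Matsumura's theorem this is a group scheme of finite type with identity component equal to $\Aut^{0}(X)$. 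Since $X$ is Calabi-Yau in a strict sense, $H^{0}(X, T_{X}) \cong H^{0}(X, \Omega^{2}_{X}) = H^{2,0}(X) = 0$ under the canonical duality $T_{X} \cong \Omega^{2}_{X}$ for Calabi-Yau threefolds, so $\Aut^{0}(X)$ is trivial and $\ker \rho$ is finite. The main subtlety is the invocation of the flop theorem for smooth Calabi-Yau threefolds in step one; the remaining arguments are a routine combination of the minimal model program and classical results on automorphism group schemes.
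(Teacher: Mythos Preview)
Your proof is correct. For uniqueness of the minimal model (and hence $\Bir(X)=\Aut(X)$) you follow the same strategy as the paper: both invoke Kawamata's theorem that minimal models are connected by flops and then use Lemma~\ref{l3} to exclude any flop. The paper phrases this by observing that the two extremal rays of $\Nef(X)$ induce fibrations rather than small contractions; you phrase it via the chain $\Nef(X)\subseteq\overline{\mathrm{Mov}(X)}\subseteq\overline{\Eff(X)}$ collapsing to an equality, leaving a single nef chamber. These are equivalent readings of the same fact. The genuine difference is in the finiteness step: the paper simply quotes a theorem of Oguiso (finiteness of $\Aut$ for odd-dimensional Calabi--Yau varieties with $\rho=2$), whereas you unpack this by analyzing the representation on $\NS(X)$ and using $H^{0}(X,T_{X})\cong H^{0}(X,\Omega^{2}_{X})=0$. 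Your route is more self-contained and makes explicit where the hypotheses $\rho(X)=2$ and the strict Calabi--Yau condition enter; the paper's route is shorter. One minor quibble: the finite-type property of $\Aut(X,L)$ is usually attributed to Grothendieck or Matsusaka--Mumford rather than Matsumura, but this does not affect the argument.
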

\begin{proof}
Let $f\colon X\dashrightarrow X'$ be a birational map with $X'$ a minimal model.
Then $f$ can be decomposed into a sequence of flops by a result of Kawamata \cite[Theorem 1]{Ka2}.
Note that any flopping contraction of a Calabi-Yau variety is given by a codimension one face of the nef cone (see \cite[Theorem 5.7]{Ka1}).
Since the codimension one faces $\R_{\geq 0}[H_{1}]$ and $\R_{\geq 0}[S]$ of $\Nef(X)$ give fibrations, it follows that $f$ is in fact an isomorphism.
For the last statement, we note that Oguiso proved that the automorphism group of any odd-dimensional Calabi-Yau variety in a wider sense with $\rho=2$ is finite.
The proof is complete.
\end{proof}

\begin{prop}\label{p}
The threefold $X$ is simply connected.
\end{prop}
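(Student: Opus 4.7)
The plan is to exploit the K3 fibration $\psi\colon X\to\P^{1}$ from Lemma~\ref{l2}, first showing that $\pi_{1}(X)$ is at most $\Z/2$ via a fibration argument, and then ruling out $\Z/2$ by an étale double cover argument.

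First, let $U\subset\P^{1}$ be the smooth locus of $\psi$ and set $V=\psi^{-1}(U)$. The restriction $\psi|_V\colon V\to U$ is topologically locally trivial with smooth K3 fibers, and since K3 surfaces are simply connected, the homotopy long exact sequence gives $\pi_{1}(V)\cong\pi_{1}(U)$, a free group generated by small loops $\gamma_i$ around the points of the discriminant $\P^{1}\setminus U$.

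Second, I would analyse how each $\gamma_i$ maps into $\pi_{1}(X)$. For reduced singular K3 fibers, which for generic $u$ will be nodal K3 surfaces (topologically simply connected), the loop $\gamma_i$ can be contracted through the fiber and becomes trivial in $\pi_{1}(X)$. The sole exception is the multiple fiber $2S$: the local étale model of $X$ near $2S$ is $(\Delta\times\widetilde S)/\mu_{2}$ where $\widetilde S\to S$ is the K3 double cover of the Enriques surface and $\mu_{2}$ acts diagonally. This yields both the multiplicity relation $\gamma^{2}=1$ and an identification of $\gamma$ with the generator of $\pi_{1}(S)=\Z/2$. That $2S$ is the unique multiple fiber of $\psi$ follows from Lemma~\ref{l3}: any effective half-fiber class must lie in $\R_{\geq 0}[H_1]\oplus\R_{\geq 0}[S]$, and only $[S]$ itself defines a half of a fiber. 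Combining these observations, $\pi_{1}(X)$ is a quotient of $\Z/2$.

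Finally, to rule out $\pi_{1}(X)=\Z/2$, suppose toward a contradiction that $\alpha\colon\widetilde X\to X$ is the corresponding connected étale double cover, and consider the Stein factorization $\widetilde X\to B\to\P^{1}$ of $\psi\circ\alpha$. Over a general smooth K3 fiber $F$ the restriction of $\alpha$ is trivial, so $B\to\P^{1}$ has degree $2$ and $B$ is connected. Under the hypothesis $\pi_{1}(X)=\Z/2$ the map $\pi_{1}(S)\to\pi_{1}(X)$ is surjective, so $\alpha^{-1}(S)=\widetilde S$ is connected, and thus $B\to\P^{1}$ is branched precisely over the point $0$ with $\psi^{-1}(0)=2S$ and nowhere else. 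But a connected degree-two cover of $\P^{1}$ must by Riemann--Hurwitz have at least two branch points, a contradiction. Hence $\pi_{1}(X)=1$.

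The hard part will be the second step: one needs to verify that singular K3 fibers of $\psi$ other than $2S$ contribute nothing to $\pi_{1}(X)$, which reduces to understanding their topological structure under the generality hypothesis on $u$; an alternative would be to sidestep this by computing $H_{1}(X,\Z)$ directly and combining with the fact that a quotient of $\Z/2$ agrees with its abelianization.
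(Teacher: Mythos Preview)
Your overall strategy---use the K3 fibration $\psi\colon X\to\P^{1}$ and reduce simple connectedness to the structure of the multiple fibers---is the same as the paper's, but there is a genuine gap in your argument for the key step, namely the uniqueness of the multiple fiber $2S$.

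Your appeal to Lemma~\ref{l3} only controls \emph{numerical} classes: it tells you that any half-fiber $D$ satisfies $[D]=[S]$ in $\NS(X)_{\R}$, but it does not rule out a second double fiber $2D$ with $D\neq S$. Indeed, $2D\sim 2S$ only forces $D-S$ to be $2$-torsion in $\Pic(X)$, and torsion-freeness of $\Pic(X)$ is equivalent to $H_{1}(X,\Z)=0$, which is essentially what you are trying to prove. If there were two double fibers, your Step~2 would only bound $\pi_{1}(X)$ by $\langle\gamma_{1},\gamma_{2}\mid\gamma_{i}^{2}=1,\ \gamma_{1}\gamma_{2}=1\rangle\cong\Z/2$, and your Step~3 would also break down: the degree-$2$ cover $B\to\P^{1}$ could then be branched over \emph{both} double-fiber points, satisfying Riemann--Hurwitz, so no contradiction arises.

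The paper handles uniqueness differently and geometrically: it invokes \cite[Lemma~5.2.2]{K} (which absorbs your Steps~1--2, including the issue of non-multiple singular fibers that you flag as ``the hard part''), reducing the problem precisely to showing $2S$ is the only multiple fiber. For this it exhibits a birational model $X_{0}\subset\P^{1}\times\P^{5}$, a complete intersection of three $(1,2)$-hypersurfaces, such that $X\to X_{0}$ contracts exactly $S$; multiple fibers of $\psi$ away from $S$ would then give multiple fibers of $pr_{1}\colon X_{0}\to\P^{1}$, which is excluded by the Lefschetz hyperplane theorem. Note also that once uniqueness is known, your Step~3 becomes unnecessary: the single relation $\prod_{i}\gamma_{i}=1$ coming from $\pi_{1}(\P^{1})=1$ already kills the lone order-$2$ generator, so $\pi_{1}(X)=1$ directly.
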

\begin{proof}
Applying \cite[Lemma 5.2.2]{K} to the K3 fibration $\psi\colon X\rightarrow \P^{1}$, whose smooth fibers are simply connected, we are reduced to showing that $2S$ is the only one multiple fiber of $\psi$.
The K3 fibration $\psi\colon X\rightarrow \P^{1}$ and the morphism $X\rightarrow \P^{5}$ given by the linear system $|H|$ induce a morphism $X\rightarrow \P^{1}\times \P^{5}$,
which is the blow-up of a non-normal complete intersection $X_{0}$ in $\P^{1}\times \P^{5}$ of three hypersurfaces of bi-degree $(1,2)$ along the non-normal locus with the exceptional divisor $S$.
Now we only need to prove that the first projection $pr_{1}\colon X_{0}\rightarrow \P^{1}$ admits no multiple fibers.
This follows from the Lefschetz hyperplane section theorem.
The proof is complete.
\end{proof}

\section{Construction III}\label{C3}
We recall a system of affine equations for an Enriques surface introduced by Colliot-Th\'el\`ene--Skorobogatov--Swinnerton-Dyer \cite{CTSSD}.

\begin{prop}[\cite{CTSSD}, Proposition 4.1, Example 4.1.2; \cite{L}, Proposition 1.1]\label{CTSSD}
Let $k$ be a field of characteristic zero.
Let $c, d, f\in k[t]$ 
be polynomials of degree $2$ such that $c\cdot d\cdot (c-d)\cdot f$ is separable.
Let $S^{0}$ be the affine variety in $\mathbb{A}^{4}=\Spec k[t, u_{1}, u_{2}, u_{3}]$ 
defined by
\[
u_{1}^{2}-c(t) = f(t)u_{2}^{2}, \,u_{1}^{2}-d(t) = f(t)u_{3}^{2}.
\]
Then a minimal smooth projective model $S$ of $S^{0}$ is an Enriques surface.
The projection $S^{0} \rightarrow \mathbb{A}^{1}=\Spec k[t]$ induces an elliptic fibration $S\rightarrow \P^{1}$ with reduced discriminant $c\cdot d \cdot (c-d)\cdot f$
which admits double fibers over $f=0$ whose reductions are smooth elliptic curves.
\end{prop}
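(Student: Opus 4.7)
Since the statement bundles several assertions, I would address them in sequence.

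Smoothness of $S^{0}$ follows from the Jacobian criterion: the matrix of partial derivatives of the two defining equations has full rank $2$ everywhere, because the only candidate singular points have $u_{1}=fu_{2}=fu_{3}=0$, and then the $t$-derivatives (involving $c',d',f'$) provide the remaining rank thanks to separability of $c\cdot d\cdot (c-d)\cdot f$.

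To construct the elliptic fibration, I would extend the projection $t$ to a morphism from a projective completion of $S^{0}$ to $\P^{1}$ and pass to a relatively minimal resolution $\pi\colon S\to\P^{1}$. Over the generic point, the fiber is a smooth genus-one curve (the intersection in $\mathbb{A}^{3}$ of two affine conics sharing the coordinate $u_{1}$), which projectively completes to an elliptic curve. A local analysis over each closed point of $\P^{1}$ identifies the degeneration type: one conic becomes a pair of lines when $c=0$ or $d=0$; the two equations coincide on a proper subvariety when $c=d$; and over zeros of $f$ the affine fibers are empty because $c(t_{0})\neq d(t_{0})$. This pins down the reduced discriminant as $c\cdot d\cdot (c-d)\cdot f$.

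For the double fibers over $f=0$, I would perform the base change $\P^{1}\to\P^{1}$ ramified exactly at the two zeros of $f$, writing $f=\tau^{2}\cdot(\text{unit})$ locally. The pulled-back fibration becomes smooth over these points, with fiber isomorphic, after appropriate completion, to the smooth genus-one curve $\{u_{1}^{2}=c(t_{0})\}\cap\{u_{1}^{2}=d(t_{0})\}$. Descent along the degree-two base change then shows that the original fibers over $f=0$ have scheme-theoretic multiplicity two with smooth elliptic reduction.

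Finally, to identify $S$ as an Enriques surface I would use the same base change to exhibit an \'etale double cover $\tilde{S}\to S$. The surface $\tilde{S}$ now has no multiple fibers, and a canonical bundle formula computation (or direct application of Kodaira's adjunction in families) yields $\omega_{\tilde{S}}=\O_{\tilde{S}}$ together with $h^{1}(\tilde{S},\O_{\tilde{S}})=0$, so $\tilde{S}$ is a K3 surface. The \'etale degree-two quotient $S=\tilde{S}/\iota$ then has $\omega_{S}$ of order two and $\pi_{1}(S)=\Z/2$, so $S$ is Enriques. The main obstacle is the choice and analysis of the proper model: verifying that the naive compactification admits a relatively minimal resolution on which the K3 double cover is genuinely \'etale, equivalently, that no spurious ramification appears at infinity or at the reducible fibers over the zeros of $c\cdot d\cdot (c-d)$.
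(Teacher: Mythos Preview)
The paper does not provide its own proof of this proposition: it is quoted verbatim from Colliot-Th\'el\`ene--Skorobogatov--Swinnerton-Dyer \cite{CTSSD} and Lafon \cite{L} and immediately applied to a specific choice of $c,d,f$. There is therefore nothing in the present paper to compare your argument against.

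That said, your outline is the standard route taken in those references: identify the generic fiber as a genus-one curve, classify the degenerations over the roots of $c$, $d$, $c-d$, and $f$, and exhibit the K3 double cover by adjoining a square root of $f$ (exactly the substitution $w^{2}=f(t)$ the paper itself uses in the proof of Lemma~\ref{l6}). Your closing caveat is the honest one: the real content lies in choosing a good projective model and checking that the double cover stays \'etale through the resolution, and your sketch does not actually carry this out. If you want a self-contained proof you will need to write down an explicit compactification (for instance in a product of weighted projective lines, or as a degeneracy locus as the paper does later) and track the fiber types through the blow-ups.
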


Now we apply Proposition \ref{CTSSD} to 
$k=\Q$ and 
\[c=-3t^{2}-2t+8,\, d=\frac{t^{2}-15t+16}{2},\, f=\frac{t^{2}+1}{2}.\]
Let $S$ be the corresponding Enriques surface.
We prove:

\begin{lem}\label{l6}
The surface $S$ has a Zariski dense set of $\Q$-rational points.
\end{lem}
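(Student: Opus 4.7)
The plan is to apply the Bogomolov--Tschinkel density result Theorem~\ref{BT} to the elliptic fibration $\pi\colon S \to \P^1$ induced by the $t$-projection (see Proposition~\ref{CTSSD}). This reduces the lemma to producing a $\Q$-rational saliently ramified multi-section $M \subset S$ whose rational points $M(\Q)$ are Zariski dense.

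The starting point is the explicit $\Q$-rational point $P_0 = (t, u_1, u_2, u_3) = (-1, 4, 3, 0) \in S^0(\Q)$, valid because $c(-1) = 7$, $d(-1) = 16$, $f(-1) = 1$ and $(4, 3, 0)$ solves both defining equations. To find a useful multi-section through $P_0$, I would consider a parametrized family of $\Q$-rational curves on $S$, for instance those cut out by a linear relation $u_2 = \alpha u_3 + \beta$ with $\alpha, \beta \in \Q$. Eliminating $u_2$ reduces $M = M_{\alpha, \beta}$ to a curve in $(t, u_1, u_3)$-space defined by two polynomial equations, generically a multi-section of $\pi$ of degree at most $4$.

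I would then choose the parameters so as to simultaneously guarantee three properties: (i) $M$ has geometric genus at most one, computed by Riemann--Hurwitz on one of the natural double covers $M \to \P^1$; (ii) $M(\Q)$ is infinite---either because $M$ is $\Q$-rational with a rational point, or because $M$ is a genus-one curve of positive Mordell--Weil rank, verified by descent on an explicit Weierstrass model; and (iii) the branch locus of $M \to \P^1_t$ is not contained in the discriminant $\{c \cdot d \cdot (c - d) \cdot f = 0\}$, so that $M$ is saliently ramified. Once such $(\alpha, \beta)$ are found, Theorem~\ref{BT} immediately yields the density of $S(\Q)$.

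The main obstacle is the simultaneous satisfaction of (i)--(iii) for an explicit choice of $(\alpha, \beta)$. The naive choice $M = \{u_3 = 0\}$ fails: Riemann--Hurwitz shows $M$ is a genus-$3$ double cover of the genus-one curve $E\colon (t^2+1) u_2^2 = t(7t - 11)$ branched over the four points above $d(t) = 0$, and Faltings then forces $M(\Q)$ to be finite. Searching within the two-parameter family $\{M_{\alpha, \beta}\}$, or alternatively pulling back a carefully chosen $\Q$-rational curve from the $\Q$-rational conic bundle $V_1\colon u_1^2 - c(t) = f(t) u_2^2$ (the rationality of $V_1$ follows from parametrizing lines through the point $(-1, 4, 3) \in V_1(\Q)$) under the double cover $S \to V_1$ defined by $u_3^2 = (u_1^2 - d(t))/f(t)$, should yield a multi-section satisfying (i)--(iii); the delicate step is the concrete Mordell--Weil rank computation in condition (ii).
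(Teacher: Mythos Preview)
Your overall strategy---apply Theorem~\ref{BT} to an elliptic fibration on $S$ by exhibiting a saliently ramified multi-section of genus $\leq 1$ with infinitely many $\Q$-points---matches the paper's, but the proposal stops short of a proof: you never actually produce the multi-section, and you acknowledge that your candidate family $\{u_2=\alpha u_3+\beta\}$ has not been shown to contain one satisfying (i)--(iii). As written, this is a plan with its hardest step left open.

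The missing idea is the one the paper uses to make that step tractable: pass to the K3 double cover $\widetilde{S}$ (set $w^2=f(t)$, $v_i=wu_i$) and work with the elliptic fibration $\widetilde{S}\to C$ over the conic $C\colon w^2=(t^2+1)/2$. Two things improve at once. First, the reduced discriminant drops to $c\cdot d\cdot(c-d)$, so the salient-ramification condition no longer involves $f$; your own point $P_0$ already lies over $t=-1$, which is outside this smaller discriminant. Second, the defining equations become $u_1^2-c=v_2^2$, $u_1^2-d=v_3^2$ with no $f$-factor, so \emph{linear} substitutions in $(t,u_1,v_2)$ cut out genuinely low-genus curves. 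The paper takes $u_1=t-3$, $v_2=2t-1$ (note this is \emph{not} in your family $u_2=\alpha u_3+\beta$, since $v_2=wu_2$), obtains an explicit elliptic curve $E$ with Weierstrass model $y^2=x^3-52x+144$, and checks via Lutz--Nagell that the image of $(t,v_3,w)=(7,-6,5)$ is non-torsion. Ramification of $E\to C$ over $t=-1$ then gives salient ramification, and Theorem~\ref{BT} finishes.

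By contrast, searching directly on $S\to\P^1_t$ in your family is harder: the $f$-factor in both equations obstructs linear simplification, the discriminant you must avoid is larger, and (as you already saw with $u_3=0$) the natural slices tend to have genus $\geq 2$. Your alternative idea of pulling back a rational curve from the conic bundle $V_1$ is closer in spirit to the paper's approach, but again you have not carried out the rank computation. The concrete recommendation is to adopt the K3-cover reduction before searching for the multi-section.
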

\begin{proof}
We follow the strategy of the proof of \cite[Proposition 5]{Sko}.
It will be easier to work on the K3 double cover $\widetilde{S}$.
By setting 
\[w^{2}=\frac{t^{2}+1}{2},\, v_{2}=wu_{2},\, v_{3}=wu_{3},\]
we obtain the defining equations of its affine model $\widetilde{S}^{0}$ in 
$\mathbb{A}^{5}=\Spec \Q[t,u_{1},v_{2},v_{3},w]$:
\[
u_{1}^{2}-(-3t^{2}-2t+8) = v_{2}^{2}, \, u_{1}^{2}-\frac{t^{2}-15t+16}{2} = v_{3}^{2},\, \frac{t^{2}+1}{2}=w^{2}.
\]
The projection 
$\widetilde{S}^{0}\rightarrow C^{0}=\Spec\Q[t,w]/(\frac{t^{2}+1}{2}-w^{2})$ 
defines an elliptic fibration $\widetilde{S}\rightarrow C$ with reduced discriminant $c\cdot d\cdot (c-d)$.

We consider the curve $E^{0}\subset \widetilde{S}^{0}$ cut out by
\[
u_{1}=t-3,\,v_{2}=2t-1,
\]
which is isomorphic to the affine curve in 
$\mathbb{A}^{3}=\Spec \Q[t,v_{3}, w]$ 
defined by
\[
\frac{1}{2}(t+1)(t+2)=v_{3}^{2},\, \frac{t^{2}+1}{2}=w^{2}.
\]
Then $E^{0}$ gives an elliptic curve $E$.
We prove that $E(\Q)$ is Zariski dense in $E$.
Let $O\in E$ be the point given by
\[
t=-1,\, v_{3}=0, \, w=-1
\]
and $P\in E$ be the point given by
\[
t=7,\, v_{3}=-6,\, w=5.
\]
We only need to prove that $P-O\in \Pic^{0}(E)$ is of infinite order.
It is a simple matter to check that the map
\[
(t,v_{3},w)\mapsto \left(\frac{2(3t-4w-1)}{t-2w-1},\frac{8v_{3}}{t-2w-1}\right)
\]
gives a transformation into the Weierstrass model
\[
y^{2}=x^{3}-52x+144
\]
and sends $O$ (resp. $P$) to the point at infinity (resp. $(x,y)=(0,12)$).
Thus it is enough to verify that $(x,y)=(0,12)$ defines a point of infinite order.
This follows from a theorem of Lutz and Nagell \cite[VIII. Corollary 7.2]{S}
since the $y$-value $12$ is non-zero and $12^{2}$ does not divide $4\cdot (-52)^{3}+27\cdot 144^{2}$.

Moreover, $E$ is a saliently ramified multi-section of the elliptic fibration $\widetilde{S}\rightarrow C$.
Indeed, it is easy to verify that $E\rightarrow C$ is branched over 
\[t=-1,\, w=\pm 1,
\]
while $t=-1$ is not a root of the reduced discriminant 
\[
c\cdot d\cdot (c-d)= (-3t^{2}-2t+8)\left(\frac{t^{2}-15t+16}{2}\right)\left(\frac{-7t^{2}+11t}{2}\right).
\]

Now Theorem \ref{BT} shows that $\widetilde{S}(\Q)$ is Zariski dense in $\widetilde{S}$.
This in turn implies that $S(\Q)$ is Zariski dense in $S$.
The proof is complete.
\end{proof}

We define a compactification of $S^{0}$ in $\P^{2}\times \P^{2}$ as follows.
We set 
\[
\P^{2}\times \P^{2}=\Proj\Q[X_{0},X_{1},X_{2}]\times \Proj\Q[Y_{0},Y_{1},Y_{2}].
\]
On $\P^{2}\times \P^{2}$, we consider the map of vector bundles
\[
v\colon \O^{\oplus 3}\rightarrow \O(2,0)\oplus \O(0,2)
\]
given by the $2\times 3$ matrix
\[
\left(
\begin{array}{ccc}
X_{0}^{2}& X_{1}^{2}& X_{2}^{2}\\
2Y_{0}^{2}+6Y_{1}^{2}+4Y_{1}Y_{2}-16Y_{2}^{2} & 2Y_{0}^{2}-Y_{1}^{2}+15Y_{1}Y_{2}-16Y_{2}^{2} & Y_{1}^{2}+Y_{2}^{2}
\end{array}
\right).
\]
Let $S'$ be the rank one degeneracy locus of $v$.
It is straightforward to see that $S'$ is indeed a compactification of $S^{0}$.
The surface $S'$ is a local complete intersection, so in particular, Gorenstein.
The surface $S'$ has isolated singular points and blowing up along the points gives a crepant resolution $S\rightarrow S'$.

Finally, we give the construction of the Calabi-Yau theefold.
On the projective bundle 
$\Pi=\P_{\P^{2}}(\mathcal{O}^{\oplus 3}\oplus \mathcal{O}(1))$, 
we let
\[
u\colon \mathcal{O}^{\oplus 3}\rightarrow \O(2H_{1})\oplus \O(2H)
\]
be general among all maps defined over $\Q$ which restrict to $v$ on $\P_{\P^{2}}(\mathcal{O}^{\oplus 3})=\P^{2}\times \P^{2}$.
We define $X$ to be the rank one degeneracy locus of $u$.

\begin{thm}\label{D}
The threefold $X$ is a Calabi-Yau threefold in a strict sense defined over $\Q$ with a Zariski dense set of $\Q$-rational points.
Moreover, $X$ satisfies the following geometric properties:
\begin{enumerate}
\item $X_{\C}$ admits K3 and elliptic fibrations;
\item $X_{\C}$ contains no abelian surface;
\item $X_{\C}$ is a unique minimal model in its birational equivalence class;
\item $\Bir(X_{\C})=\Aut(X_{\C})$ and these groups are finite.
\end{enumerate}
\end{thm}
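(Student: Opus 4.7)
The plan is to derive the geometric assertions (1)--(4) and the strict-sense Calabi-Yau property from Section \ref{CY3} by a generic-in-$\mathcal{V}$ argument, and then to establish density of $\Q$-rational points by applying Theorem \ref{BT} to the elliptic fibration $\phi\colon X\to \P^{2}$ of Lemma \ref{l2} with the compactified Enriques surface $S'\subset X$ as a saliently ramified multi-section.

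For (1)--(4), the generality conditions on $u$ used throughout Section \ref{CY3} (smoothness of $X$, the correct dimension of the rank-one locus, the Enriques-surface and K3-fibration structures, and finiteness of $\Aut$) are Zariski open, hence collectively cut out a non-empty open subset $\mathcal{U}$ of the $\Q$-affine scheme $\mathcal{M}$ parametrizing maps $u\colon \O^{\oplus 3}\to \O(2H_{1})\oplus \O(2H)$. The subscheme $\mathcal{V}\subset \mathcal{M}$ of maps restricting to $v$ is $\Q$-affine, and $\mathcal{V}\cap \mathcal{U}$ is non-empty since the openness conditions are controlled by the $H$-components of $u$, which are unconstrained by the restriction. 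Thus $\mathcal{V}\cap \mathcal{U}$ is a non-empty Zariski-open $\Q$-subscheme of $\mathcal{V}$, and by density of $\Q$-rational points in affine space it contains a $\Q$-point. Picking such $u$, all results of Section \ref{CY3} apply to $X$, giving (1)--(4) and that $X$ is a Calabi-Yau threefold in a strict sense.

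For density of $\Q$-points, I would use the subscheme $S'\subset X$, which equals $X\cap (\P^{2}\times \P^{2})$ and is birational over $\Q$ to the Enriques surface $S$ of Lemma \ref{l6}; hence $S'(\Q)$ is Zariski dense. The map $\phi|_{S'}\colon S'\to \P^{2}$ is generically finite of degree four (over a general $p\in \P^{2}$, the rank-one condition on $v(p,\,\cdot\,)$ reduces to two independent quadrics in $\P^{2}_{Y}$ intersecting in four points by B\'ezout), so $S'$ is a multi-section of $\phi$. To obtain the saliently-ramified condition, I would further constrain $u$ by the Zariski-open condition that the branch curve of the fixed map $\phi|_{S'}$ in $\P^{2}$ is not contained in the discriminant curve of $\phi$ in $\P^{2}$ (which depends on the $H$-components of $u$); for such $u$ there exists a ramification point of $\phi|_{S'}$ lying over a smooth fiber of $\phi$. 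Applying Theorem \ref{BT} to the triple $(\phi, S', \Q)$ then yields $X(\Q)$ Zariski dense.

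The main obstacle I anticipate is the saliently-ramified verification: one must check, via a Jacobian computation along the degeneracy locus defining $S'$, that the branch curve of $\phi|_{S'}$ (intrinsic to $v$) and the discriminant of $\phi$ (varying with the $H$-components of $u$) are generically distinct curves in $\P^{2}$. Once this openness is in hand, the remaining ingredients are either cited from Sections \ref{CY3} and \ref{C3} and Lemma \ref{l6}, or follow from elementary density of $\Q$-rational points in non-empty $\Q$-open subsets of affine space.
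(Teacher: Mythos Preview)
Your overall strategy matches the paper's: invoke Section~\ref{CY3} for the geometric properties (1)--(4) and the strict Calabi--Yau condition, then prove density of $\Q$-points by applying Theorem~\ref{BT} to $\phi\colon X\to\P^{2}$ with $S'$ as a saliently ramified multi-section with dense $\Q$-points (via Lemma~\ref{l6}). The density half of your proposal is correct and essentially identical to the paper's argument; your identification of the branch/discriminant comparison as the key check is exactly what the paper does (it points to Lemma~\ref{l0} and then gives an explicit Macaulay~2 verification).

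There is, however, a genuine gap in your openness argument for (1)--(4). You assert that the generality conditions from Section~\ref{CY3} cut out an open set $\mathcal{U}$ and that $\mathcal{V}\cap\mathcal{U}\neq\emptyset$ because ``the openness conditions are controlled by the $H$-components of $u$, which are unconstrained by the restriction.'' But one of the conditions used in Section~\ref{CY3}, namely that $X\cap(\P^{2}\times\P^{2})$ be a \emph{smooth} Enriques surface (Lemma~\ref{l2}), depends only on $v$ and \emph{fails} for the specific $v$ fixed here: the paper states explicitly that $S'$ has isolated singular points and that the Enriques surface $S$ arises only after a crepant blow-up. Thus with your $\mathcal{U}$ as written, $\mathcal{V}\cap\mathcal{U}=\emptyset$, and the openness argument collapses. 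The paper does not argue by openness; it instead asserts that ``the proofs in Section~\ref{CY3} still go through,'' meaning that the arguments of Lemmas~\ref{l1}--\ref{l3} and Proposition~\ref{p} adapt to the situation where $S'\subset X$ is singular with Enriques crepant resolution (for instance, deformation invariance of Hodge numbers gives $h^{1,1}(X)=2$ once $X$ is smooth, and the cone/fibration analysis then proceeds with $S'$ in place of $S$). To repair your argument you must either carry out this adaptation or, as the paper also does, exhibit an explicit $u\in\mathcal{V}(\Q)$ and verify smoothness of $X$ and the requisite properties directly.
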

\begin{proof}
One can check that $X$ is smooth and the proofs in Section \ref{CY3} still go through.
It remains to show that $X(\Q)$ is Zariski dense in $X$.
By a similar argument to that in Lemma \ref{l0}, $S'$ is a saliently ramified multi-section of the elliptic fibration $\phi \colon X\rightarrow \P^{2}$.
Since $S'(\Q)$ is Zariski dense in $S'$ by Lemma \ref{l6}, the result follows from Theorem \ref{BT}.
The proof is complete.
\end{proof}

One can be more explicit.
We fix a non-zero element 
$Z\in H^{0}(\Pi,\O_{\Pi}(H-H_{1}))$.
Let $u$ be given by the matrix
\[
\left(
\begin{array}{ccc}
P_{1}& Q_{1}& R_{1}\\
P_{2} & Q_{2}& R_{2}
\end{array}
\right),
\]
where
\[
P_{1}=X_{0}^{2},\, Q_{1}=X_{1}^{2},\, R_{1}=X_{2}^{2}
\]
and
\begin{align*}
P_{2}&=(X_{1}^{2}+X_{2}^{2})Z^{2}+(X_{1}Y_{1}+X_{2}Y_{2})Z+2Y_{0}^{2}+6Y_{1}^{2}+4Y_{1}Y_{2}-16Y_{2}^{2},\\
Q_{2}&=(X_{0}^{2}+X_{2}^{2})Z^{2}+(X_{0}Y_{0}+X_{2}Y_{2})Z+2Y_{0}^{2}-Y_{1}^{2}+15Y_{1}Y_{2}-16Y_{2}^{2},\\
R_{2}&=(X_{0}^{2}+X_{1}^{2})Z^{2}+(X_{0}Y_{0}+X_{1}Y_{1})Z+Y_{1}^{2}+Y_{2}^{2}.
\end{align*}
Then {\it Macaulay2} shows that the corresponding $X$ is smooth and that the discriminant locus $\Delta\subset \P^{2}$ of $\phi\colon X\rightarrow \P^{2}$ and the branch locus $B\subset \P^{2}$ of $\phi|_{S}\colon S\rightarrow \P^{2}$ meet properly.
Consequently, the set $X(\Q)$ is Zariski dense in $X$.

\end{document}